\theoremstyle{plain}
\newtheorem{theorem}{Theorem}[section]
\newtheorem{lemma}[theorem]{Lemma}
\newtheorem{proposition}[theorem]{Proposition}
\newtheorem{corollary}[theorem]{Corollary}
\newtheorem*{definition}{Definition}
\newtheorem{example}[theorem]{Example}
\newtheorem*{warning}{Warning}
\newtheorem*{convention}{Convention}
\newtheorem*{remark}{Remark}
\title{A New Approach to the Automorphism Group of a Platonic Surface}
\author{David Aulicino\thanks{The author was partially supported by the National Science Foundation under Award Nos. DMS - 1738381, DMS - 1600360, and PSC-CUNY Grants $\sharp$~60571-00 48 and $\sharp$~61639-00 49.}}
\date{}
\begin{document}

\newcommand{\splin}{\text{SL}_2(\mathbb{R})}
\newcommand{\spolin}{\text{SO}_2(\mathbb{R})}
\newcommand{\nc}{\newcommand}

%\nc\bA{\mathbb{A}}
\nc\bB{\mathbb{B}}
\nc\bC{\mathbb{C}}
\nc\bD{\mathbb{D}}
\nc\bE{\mathbb{E}}
\nc\bF{\mathbb{F}}
\nc\bG{\mathbb{G}}
\nc\bH{\mathbb{H}}
\nc\bI{\mathbb{I}}
\nc{\bJ}{\mathbb{J}}
\nc\bK{\mathbb{K}}
\nc\bL{\mathbb{L}}
\nc\bM{\mathbb{M}}
\nc\bN{\mathbb{N}}
\nc\bO{\mathbb{O}}
\nc\bP{\mathbb{P}}
\nc\bQ{\mathbb{Q}}
\nc\bR{\mathbb{R}}
\nc\bS{\mathbb{S}}
\nc\bT{\mathbb{T}}
\nc\bU{\mathbb{U}}
\nc\bV{\mathbb{V}}
\nc\bW{\mathbb{W}}
\nc\bY{\mathbb{Y}}
\nc\bX{\mathbb{X}}
\nc\bZ{\mathbb{Z}}

% mathcal
\nc\cA{\mathcal{A}}
\nc\cB{\mathcal{B}}
\nc\cC{\mathcal{C}}
\nc\cD{\mathcal{D}}
\nc\cE{\mathcal{E}}
\nc\cF{\mathcal{F}}
\nc\cG{\mathcal{G}}
\nc\cH{\mathcal{H}}
\nc\cI{\mathcal{I}}
\nc{\cJ}{\mathcal{J}}
\nc\cK{\mathcal{K}}
%\rnc\cL{\mathcal{L}}
\nc\cM{\mathcal{M}}
\nc\cN{\mathcal{N}}
\nc\cO{\mathcal{O}}
\nc\cP{\mathcal{P}}
\nc\cQ{\mathcal{Q}}
%\rnc\cR{\mathcal{R}}
\nc\cS{\mathcal{S}}
\nc\cT{\mathcal{T}}
\nc\cU{\mathcal{U}}
\nc\cV{\mathcal{V}}
\nc\cW{\mathcal{W}}
\nc\cY{\mathcal{Y}}
\nc\cX{\mathcal{X}}
\nc\cZ{\mathcal{Z}}

\maketitle

\begin{abstract}
We borrow a classical construction from the study of rational billiards in dynamical systems known as the ``unfolding construction'' and show that it can be used to study the automorphism group of a Platonic surface.  More precisely, the monodromy group, or deck group in this case, associated to the cover of a regular polygon or double polygon by the unfolded Platonic surface yields a normal subgroup of the rotation group of the Platonic surface.  The quotient of this rotation group by the normal subgroup is always a cyclic group, where explicit bounds on the order of the cyclic group can be given entirely in terms of the Schl\"afli symbol of the Platonic surface.  As a consequence, we provide a new derivation of the rotation groups of the dodecahedron and the Bolza surface.
\end{abstract}

%\tableofcontents

\section{Introduction}
\label{IntroSection}

Platonic surfaces are natural generalizations to higher genus of the surfaces of the classical Platonic solids.  Consider a collection of regular (Euclidean) $p$-gons with topological identifications among pairs of edges to form an orientable closed surface with the property that the automorphism (Euclidean isometry) group acts transitively on flags of faces, edges, and vertices.  The skeleton of a Platonic surface is commonly called a regular map.  If the degree of a vertex in the regular map is $q$, then the regular map has Schl\"afli symbol $\{p,q\}$.  By regarding the regular $p$-gons with their Euclidean geometry, elementary concepts from the study of translation surfaces can be brought to bear on the subject of Platonic surfaces.  In particular, we consider the deck group from topology of the unfolded Platonic surface induced by a covering map $\pi_{\Pi}$ to the $p$-gon or double $p$-gon depending on the parity of $p$.  Precise definitions will be given in the next section.  The rotation group of a Platonic surface is the index two subgroup of the automorphism group consisting of \emph{orientation preserving} automorphisms.  %We prove
 
\begin{theorem}
\label{PartNormTower}
The deck group $\text{Mon}(\pi_{\Pi})$ is isomorphic to a normal subgroup $N$ of the rotation group\footnote{The rotation group is often called the \emph{rotary group}.} $\text{Rot}(D)$ of the Platonic surface $D$.  Furthermore,
$$\text{Rot}(D)/N$$
is a cyclic group.
\end{theorem}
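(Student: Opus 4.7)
The plan is to realize $N = \text{Mon}(\pi_\Pi)$ as the kernel of an explicit homomorphism from $\text{Rot}(D)$ to a finite cyclic subgroup of $\text{SO}_2(\mathbb{R})$. The unfolding construction should produce a regular (Galois) covering, so that the monodromy group and the deck group coincide; elements of $N$ then act on the unfolded surface $\tilde D$ by translations commuting with $\pi_\Pi$.

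First I would construct a natural injection $N \hookrightarrow \text{Rot}(D)$. A deck transformation of $\pi_\Pi$ preserves the fibers of the projection to $P$ and acts isometrically on each face of $\tilde D$; since $\tilde D$ is built from $D$ by resolving the rotational identifications of its faces, each such deck transformation descends unambiguously to an orientation-preserving isometry of $D$. This gives a well-defined embedding of $N$ into $\text{Rot}(D)$.

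Next I would build the cyclic quotient via a ``rotational derivative'' homomorphism $\rho$. Given $g \in \text{Rot}(D)$, lift $g$ to an affine automorphism of $\tilde D$; the linear part of this lift must be a Euclidean rotation preserving the reference polygon $P$, and so lies in the cyclic group of rotational symmetries of $P$ (of order $p$ or $2p$). Assembling these linear parts produces a group homomorphism $\rho : \text{Rot}(D) \to \text{SO}_2(\mathbb{R})$ by the chain rule. Its kernel is precisely the set of rotations of $D$ that lift to translations of $\tilde D$, which by the previous step is exactly $N$. Hence $N \triangleleft \text{Rot}(D)$, and $\text{Rot}(D)/N \cong \mathrm{Im}\,\rho$ is a finite subgroup of $\text{SO}_2(\mathbb{R})$, hence cyclic.

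The main obstacle will be verifying that $\rho$ is well-defined -- equivalently, that every $g \in \text{Rot}(D)$ lifts to an affine automorphism of $\tilde D$ whose rotational part is independent of the choice of lift. This should follow from showing that rotations of $D$ normalize the deck group of the intermediate covering $\tilde D \to D$ implicit in the unfolding, so that the linear part is insensitive to which sheet of $\tilde D$ is selected. Once this normalization property is extracted from the explicit combinatorial description of the unfolded surface, the remainder of the argument is essentially formal.
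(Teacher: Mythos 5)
Your overall architecture is the same as the paper's: both arguments realize $N$ as the kernel of a ``derivative'' homomorphism whose image is a finite group of planar rotations, hence cyclic. But there is a genuine problem with the well-definedness of your map $\rho:\text{Rot}(D)\to \text{SO}_2(\mathbb{R})$, and you have misdiagnosed the obstruction. Two lifts of the same $g\in\text{Rot}(D)$ to $\tilde D$ differ by an element of $\text{Deck}(\pi_D)$, and the nontrivial elements of \emph{that} deck group are not translations: they act on each face as a rotation by a multiple of $2\pi/k$ (this is exactly the content of Lemma \ref{KerPhiDRotEdges}). Hence the linear part of a lift of $g$ is only defined modulo the cyclic group generated by rotation by $2\pi/k$, and no amount of ``rotations of $D$ normalize the deck group'' will make it independent of the choice of lift --- normalization controls conjugation, not the ambiguity coming from postcomposing with a deck transformation. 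The paper sidesteps this by running the derivative argument upstairs: the derivative map is honestly well defined on $\text{Rot}(\tilde D)$ because $\tilde D$ is a translation surface, its kernel is $\text{Deck}(\pi_\Pi)\cong\text{Mon}(\pi_\Pi)$, and $\text{Rot}(D)/N$ is then a quotient of the cyclic group $\text{Rot}(\tilde D)/\text{Mon}(\pi_\Pi)$, hence cyclic. Your version can be repaired the same way, or by letting $\rho$ take values in the circle group $\text{SO}_2(\mathbb{R})$ modulo rotation by $2\pi/k$, whose finite subgroups are still cyclic.

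Two further steps are asserted rather than proved. First, that every $g\in\text{Rot}(D)$ lifts to $\tilde D$ at all: this is the paper's Lemma \ref{DiagCommutes}, proved using flag-transitivity of $\tilde D$, and it is what makes $\phi_D$ surjective. Second, that the descent map $N\to\text{Rot}(D)$ is injective. A deck transformation of $\pi_\Pi$ could a priori descend to the identity of $D$ if it happened to lie in $\text{Deck}(\pi_D)$; ruling this out requires the same rotation-versus-translation dichotomy as above, namely that nontrivial elements of $\text{Deck}(\pi_\Pi)$ preserve directions while nontrivial elements of $\ker(\phi_D)=\text{Deck}(\pi_D)$ do not, so the two deck groups intersect trivially. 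This is precisely how the paper shows $\text{Mon}(\pi_\Pi)$ is \emph{isomorphic} to its image in $\text{Rot}(D)$, so it is not optional bookkeeping but the heart of the first claim of the theorem.
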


We use the notation $\text{Mon}(\pi_{\Pi})$ for the deck group because the real object we consider is the monodromy group from algebraic topology.  The monodromy group refers to a \emph{different object} in the field of regular maps and we avoid the terminology altogether in this introduction.  Nevertheless, we use it in the remainder of the paper.

For a precise version of this theorem with additional information see Theorem \ref{MonGrpNormSubgrpRot}.  The significance of the theorem is that given a Platonic surface $D$, $\text{Mon}(\pi_{\Pi})$ is very easy to compute as a subgroup of a large symmetric group.  While the theorem does not always provide sufficient information to compute the automorphism group of the regular map, it sometimes suffices to completely determine $\text{Rot}(D)$.

The order of the cyclic group in Theorem \ref{PartNormTower} can be bounded in terms of the Schl\"afli symbol as follows:

\begin{proposition}
\label{Computek}
Let $D$ be a Platonic surface with Schl\"afli symbol $\{p,q\}$.  Define $k'$ to be the smallest positive integer satisfying the equation 
$$k'\frac{q(p-2)}{p}\pi \equiv  0\mod 2\pi.$$
If $k$ is the degree of the covering of the unfolded Platonic surface, then $k' | k$.  If $p$ is even, then 
$$k' \leq k \leq p,$$
and if $p$ is odd, then
$$k' \leq k \leq 2p.$$
\end{proposition}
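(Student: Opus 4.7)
The proof naturally splits into two parts: the divisibility $k' \mid k$ and the upper bound on $k$.

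For the divisibility, I would count cone angles on $\Pi$. Each vertex of $D$ carries a cone angle $\theta := q(p-2)\pi/p$ because $q$ regular $p$-gons meet there. Fix a vertex $v \in D$ and any preimage $\tilde v \in \Pi$ of $v$ under the intermediate covering $\Pi \to D$; write $b_{\tilde v}$ for its local degree. The cone angle of $\Pi$ at $\tilde v$ equals $b_{\tilde v}\theta$, and since $\Pi$ is a translation surface, every cone angle is an integer multiple of $2\pi$. Thus $b_{\tilde v}\theta \in 2\pi\bZ$, which by the definition of $k'$ forces $k' \mid b_{\tilde v}$. Summing the local degrees over all preimages of $v$ yields $\deg(\Pi \to D)$; as each summand is a multiple of $k'$, so is the sum. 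Since $\deg(\Pi \to D)$ divides $k = \deg \pi_\Pi$, we conclude $k' \mid k$.

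For the upper bound, I would describe $\Pi$ via the explicit form of the unfolding construction. One builds $\Pi$ from copies of a fundamental regular $p$-gon indexed by elements of the finite group $G \subset \mathrm{O}(2)$ generated by reflections in the lines supporting the edges of a fixed $p$-gon. These edge lines have directions evenly spaced by $2\pi/p$ around the circle, so modulo $\pi$ there are $p$ distinct lines when $p$ is odd and $p/2$ when $p$ is even (opposite edges are parallel). Consequently $G$ is dihedral of order $2p$ for $p$ odd and of order $p$ for $p$ even, and the degree of $\pi_\Pi$ is at most $|G|$, which yields the stated upper bounds.

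The main obstacle is the upper-bound bookkeeping: one must verify that the degree of $\pi_\Pi$ is indeed bounded above by $|G|$, and that the base polygon of $\pi_\Pi$ is correctly identified by the parity of $p$ (single $p$-gon for $p$ even, double $p$-gon for $p$ odd). The divisibility half is more direct, depending only on the translation-surface condition on $\Pi$ together with the factorization $\Pi \to D \to P$ that underlies Theorem \ref{PartNormTower}.
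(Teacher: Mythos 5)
Your divisibility argument is correct and is essentially the paper's, written a little more carefully: the cone angle at each vertex of $D$ is $\theta = q(p-2)\pi/p$, every cone angle of the unfolding is an integral multiple of $2\pi$, the set of integers $n$ with $n\theta \in 2\pi\mathbb{Z}$ is the ideal generated by $k'$, so $k'$ divides each local degree over a vertex and hence their sum, which is the degree of the unfolding. Do fix the notation, though: the surface you call $\Pi$ is the unfolding $\tilde D$, and the $k$ of the proposition is $\deg \pi_D$, not $\deg \pi_\Pi$ (the latter equals $km$ or $km/2$ and is certainly not bounded by $2p$); moreover there is no covering $D \rightarrow \Pi_p$ in this paper --- the diagram is the wedge $D \leftarrow \tilde D \rightarrow \Pi_p$, so the ``factorization $\Pi \to D \to P$'' you invoke does not exist.

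The genuine gap is in the upper bound. The unfolding $\tilde D \rightarrow D$ is the cyclic cover attached to the rotational holonomy, so $k$ is the order of the linear holonomy group $H \leq SO(2)$, which is generated by the linear parts of the edge-crossing maps: rotations by $\pi + 2\pi j/p$. Your group $G$, generated by \emph{reflections} in the edge lines, is not this group, and for $p$ odd it does not even contain $H$: your $G$ is dihedral of order $2p$ whose rotation subgroup is $\langle 2\pi/p \rangle$ of order $p$, whereas $H$ can be the full cyclic group of order $2p$ generated by rotation by $\pi/p$ --- this actually happens for the dodecahedron, where $k = 10 = 2p$. Hence the sheets of the unfolding are not indexed by elements of $G$, and the step ``the degree is at most $|G|$'' is unjustified; the numerical agreement with the correct bounds is a coincidence of orders, not a containment. (The reflection-group picture is the right one for unfolding a polygonal \emph{billiard table}, where mirror-image copies genuinely occur; $D$ is already an oriented closed surface and only rotations arise.) The repair is the paper's argument: each edge crossing rotates a parallel-transported vector by $\pi + 2\pi j/p \in (\pi/p)\mathbb{Z}$, so $H$ is contained in $(\pi/p)\mathbb{Z}/2\pi\mathbb{Z} \cong \mathbb{Z}/2p\mathbb{Z}$ and $k \mid 2p$; when $p$ is even, $\pi$ itself lies in $(2\pi/p)\mathbb{Z}$, so $H$ is contained in $(2\pi/p)\mathbb{Z}/2\pi\mathbb{Z} \cong \mathbb{Z}/p\mathbb{Z}$ and $k \mid p$.
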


Proposition \ref{Computek} will be proven in Section \ref{CompSect}.  As a corollary of the results in that section, we get

\begin{theorem}
\label{GCD1Thm}
Let $D$ be a Platonic surface with Schl\"afli symbol $\{p,q\}$.  If $p$ and $q$ are relatively prime and either
\begin{itemize}
\item $p$ or $q$ is divisible by $4$, or
%\item $p \equiv q \equiv 2 \mod 4$, or IN THIS CASE THE GCD IS AT LEAST 2
\item $p$ and $q$ are odd,
\end{itemize}
then
$$\text{Mon}(\pi_{\Pi}) \cong \text{Rot}(D).$$
\end{theorem}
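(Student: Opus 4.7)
The plan is to combine Theorem \ref{PartNormTower} with Proposition \ref{Computek} and squeeze the cyclic quotient $C := \text{Rot}(D)/N$, where $N := \text{Mon}(\pi_{\Pi})$, to be trivial under each hypothesis. By Theorem \ref{PartNormTower}, $C$ is already cyclic, so the task reduces to showing $|C| = 1$. I expect $|C|$ to equal the ratio of the upper bound on $k$ (namely $p$ or $2p$, depending on the parity of $p$) to $k$ itself, so it suffices to show that $k$ saturates that upper bound.

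The main computation is of $k' = 2p/\gcd(2p, q(p-2))$ from Proposition \ref{Computek}, carried out by splitting into three disjoint cases using $\gcd(p, q) = 1$. When $p$ and $q$ are both odd, $q(p-2)$ is odd and $\gcd(p, q(p-2)) = 1$ (since $\gcd(p, q) = 1$ and $\gcd(p, p-2)$ divides $2$), so $\gcd(2p, q(p-2)) = 1$ and $k' = 2p$. When $4 \mid p$, the $2$-adic valuation of $q(p-2)$ is exactly $1$ while that of $2p$ is at least $3$, so the $2$-part of the gcd is $2$; the odd part again collapses by the same coprimality argument, giving $\gcd(2p, q(p-2)) = 2$ and $k' = p$. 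When $4 \mid q$ (which forces $p$ odd), a parallel computation again gives $k' = p$.

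Applying $k' \mid k$ together with the upper bound from Proposition \ref{Computek} pinches $k$ in two of the three cases: the odd-coprime case forces $k = k' = 2p$, saturating the bound $2p$, while the $4 \mid p$ case forces $k = k' = p$, saturating the bound $p$. In both situations, $C$ collapses to the trivial group, and the isomorphism $\text{Mon}(\pi_{\Pi}) \cong \text{Rot}(D)$ follows from Theorem \ref{PartNormTower}.

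The main obstacle is the case $4 \mid q$ with $p$ odd, where $k' = p$ but the upper bound is $2p$, so a priori $k \in \{p, 2p\}$ and pinching alone does not succeed. I expect the refinement needed here to come from the same geometric construction underlying Proposition \ref{Computek} in Section \ref{CompSect}: the hypothesis $4 \mid q$ forces extra parity in the cone angle $q(p-2)\pi/p$ at each vertex, and this should force the cover of the double $p$-gon to have degree exactly $2p$ rather than $p$. Once $k = 2p$ is established in this case as well, $C$ is trivial and the corollary follows.
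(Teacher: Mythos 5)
Your reduction to showing that the cyclic quotient is trivial, and your handling of the first two cases, essentially reproduce the paper's argument: the paper computes $k'$ case-by-case (Lemma \ref{ValueOfk}, Table \ref{ValsofkTable}) and converts $k' \mid k$ together with $k \le p$ (resp.\ $2p$) into the upper bound $d=\gcd(p,q)$ on the order of $\text{Rot}(D)/\phi_D(\text{Mon}(\pi_{\Pi}))$ recorded in Table \ref{ValsOfMonGrpInd}; when $p$ and $q$ are both odd, or when $4 \mid p$ with $q$ odd, that bound is $d=1$ and the conclusion follows. Your gcd computations of $k'$ in those two cases are correct and agree with the table.

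The gap is exactly where you flagged it, but the repair you propose will not work. In the case $p$ odd, $4 \mid q$, you hope that the hypothesis forces the unfolding degree to be $2p$ rather than $p$. It does not: as your own computation shows, the extra factor of $2$ in $q$ makes the vertex cone-angle constraint \emph{weaker} (it gives $k'=p$), and whether $k$ equals $p$ or $2p$ is then decided by the global combinatorics of $D$, not by the Schl\"afli symbol. The octahedron $\{3,4\}$ is a concrete obstruction: its face-adjacency graph is the bipartite cube graph, so every closed curve crosses an even number of edges; each crossing changes the holonomy by an odd multiple of $\pi/3$, so the holonomy of every closed curve lies in $(2\pi/3)\mathbb{Z}$ and $k = 3 = p$, not $2p$. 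The paper's resolution of this case is entirely different: it passes to the dual surface $X^{\vee}$ with Schl\"afli symbol $\{q,p\}$, proves $\text{Rot}(X^{\vee}) \cong \text{Rot}(X)$ by matching face--edge pairs on $X^{\vee}$ with edge--vertex pairs on $X$, and applies the already-settled case (first entry divisible by $4$, second entry odd) to $X^{\vee}$. Note that this argument identifies $\text{Rot}(D)$ with the monodromy group of the \emph{dual's} unfolding, a subtlety you would need to build into the conclusion; without the duality step your proof covers only the first two bullet configurations.
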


This will be used in Section \ref{ExampleSect} to derive the rotation groups of the dodecahedron and the Bolza surface.

The significance of this paper is that it turns the problem of classifying Platonic surfaces or regular maps into a purely geometric problem where the group theoretic aspects should be manageable.

What is most surprising in this context is that the unfolding construction has been used in the field of dynamical systems since the 1930's to study the dynamics of a ball in certain billiard tables.  Here the unfolding construction proves to be so natural that it extracts group theoretic information from the Platonic surface with complete disregard for any dynamical system on the surface.

There have been numerous works studying this problem graph theoretically, group theoretically, and hyperbolic geometrically.  From the graph theoretic perspective, a survey can be found in \cite{ConderSurvey}.  A census of regular maps in small genus was constructed in \cite{ConderDobcsanyiCensus}, and a far larger census can be found on the website of Marsten Conder, where group theoretic methods in Magma were used to generate the list \cite{ConderWebsite}.  From a geometric perspective, works of \cite{KarcherWeber} give an excellent comprehensive exposition of some particular cases, and \cite{McMullenSchulteText} is a recent textbook covering a wealth of material with an extensive bibliography.

The connection to hyperbolic geometry is as follows.  If a Riemann surface has a sufficiently large automorphism group\footnote{Automorphism groups of order strictly greater than $12(g-1)$ suffice.  In the paper, \cite{SwinarskiLargeAutGrp} large is taken to mean order at least $4(g-1)$.  In any case, this will not be used in this paper.}, then its automorphism group is isomorphic to a quotient of a triangle group by a torsion-free, finite-index normal subgroup.  Therefore, the surface is tiled by hyperbolic triangles and covers a sphere consisting of two isometric hyperbolic triangles.  By replacing the hyperbolic triangles by appropriate Euclidean triangles, the surface is tiled by regular $p$-gons and is in fact a Platonic surface.  Its skeleton is a regular map.

We remark here that these objects occur in the study of Bely\u{\i} maps and dessins d'enfants as well.

The key perspective here is that we regard Platonic surfaces by their almost everywhere flat geometry.  For example, locally a cube looks geometrically like the plane everywhere except for its $8$ vertices, which have angle $3\pi/2$ instead of $2\pi$, like the plane.  We observe that the interiors of the edges of the polygons do not see this issue and therefore, are intrinsically geometrically indistinguishable from a small disc in the Euclidean plane.  We encourage the reader to look at Figure \ref{BolzaSurface}, which represents the Bolza surface in genus two with a flat geometric structure imposed on it.  The flat geometry is clear from the fact that the figure is drawn on a piece of paper.  We observe that the negative curvature of the surface is ``hidden'' in the corners of the octagons which have angle $3(3\pi/4) = 9\pi/4$.  Interestingly, the problem is addressed by turning specific Riemann surfaces into so-called flat cone surfaces by choosing a very particular flat structure on the surface that uncovers properties of the hyperbolic geometry.  %For the algebro-geometric minded reader, a specific $k$-differential, i.e. an element in the $k^{th}$ tensor power of the canonical line bundle, is chosen on a Riemann surface.  The algebro-geometric perspective is \emph{not} used in this paper.

\noindent \textbf{Outline:} In Section \ref{PreliminariesSect}, we present the relevant background.  The main theorem is proved in Section \ref{MainProofsSect}.  Section \ref{CompSect} contains estimates on the order of the cyclic group in Theorem \ref{PartNormTower}.  Finally, Section \ref{ExampleSect} gives examples of the main theorem applied to both the dodecahedron and the Bolza surface.

\subsubsection*{Acknowledgments}
The author thanks Pat Hooper for careful reading and assistance with many of the technical proofs in this paper, and Jayadev Athreya for originally communicating the possible connection between their previous joint work and Platonic surfaces.  He also thanks Ferr\'an Valdez for inspiring this work, and for his continued interest in it.  Finally, he is grateful to Chris Judge and David Torres-Teigel for helpful conversations.

\section{Preliminaries}
\label{PreliminariesSect}

In this section, we define all of the necessary terms and record all of the results needed from \cite{AthreyaAulicinoHooperDodecahedron}.

\subsection{Geometry}

\subsubsection{The Primary Objects}

\begin{definition}
Given a collection of regular Euclidean $p$-gons, let $D$ be a surface without boundary formed from an identification of the edges of the polygons so that the result is closed and orientable.  Define a \emph{flag}\footnote{This is also called a \emph{blade} in the graph theory literature.} of $D$ to be the triple $(f, e, v)$ of a polygon $f$ with one of its boundary edges $e$, where $e$ is a vector in $\bR^2$, and a vertex $v$ incident with $e$.  If the automorphism group of $D$ is transitive on flags, then $D$ is called a \emph{Platonic surface}.
\end{definition}

\begin{remark}
For the reader unfamiliar with this concept, we emphasize that this is a topological construction and surfaces of higher genus with curvature as specified above cannot be embedded in $3$-dimensions.  In spite of this, these surfaces are very easy to depict by hiding the points with angle greater than $2\pi$ in the corners of the presentation.  See Figure \ref{BolzaSurface} for an example.
\end{remark}

\begin{definition}
The \emph{Schl\"afli symbol} of a Platonic surface is the data $\{p,q\}$, where every face is bounded by $p$ edges and every vertex has degree $q$.
\end{definition}

%The concept of a translation surface is a very general concept applying to any choice of a $1$-form on a Riemann surface.  Such generality is completely unnecessary for this paper, which is only concerned with finite collections of regular polygons.

\begin{definition}
Given a collection of regular Euclidean $p$-gons, let $M$ be a surface without boundary given by an identification of the edges of the polygons so that the result is closed and orientable, and the angle at every point is an integral multiple of $2\pi$.  We call such a surface with a choice of horizontal direction a \emph{regular $p$-gon-tiled surface}.
\end{definition}

See Figure \ref{BolzaSurfaceUnfold} for an example.  In fact, Figure \ref{fig:pi} described in the example below provides perhaps two of the most elementary examples of such a surface.  We refer the reader to the figures in \cite{AthreyaAulicinoHooperDodecahedron} for many more examples coming from the classical Platonic solids.  We also remark that the regular $p$-gon-tiled surfaces considered in this paper are almost examples of Platonic surfaces.  The subtle difference is that regular $p$-gon-tiled surfaces have a choice of horizontal direction and Platonic surfaces do not.  Forgetting the additional information of the choice of direction, the regular $p$-gon-tiled surfaces considered in this paper will be examples of Platonic surfaces.

\begin{remark}
The choice of horizontal direction is actually a natural piece of information that comes from complex analysis.  Away from the corners of the polygons, a local neighborhood can be viewed as a small neighborhood in the complex plane with the differential $dz$.  Consider the differential $dz = dx + \sqrt{-1}\,dy$ on $\bC$.  The equations $\text{Re}(dz) = 0$ and $\text{Im}(dz) = 0$ extract the families of vertical and horizontal lines, respectively, in the complex plane.  The choice of horizontal, or vertical, direction is exactly specifying how the differential lies on the surface.  Without this, the differential could only be distinguished up to multiplication by a complex unit.
\end{remark}

\begin{example}
The first examples the reader should consider for this paper are known as the $p$-gon or double $p$-gon, depending on the parity of $p$.

Observe that if $p$ is even, every side of a regular $p$-gon has a unique opposite parallel side.  Identifying these sides leads to an example of a translation surface (in fact a regular $p$-gon-tiled surface) denoted here by $\Pi_p$.

The second example to consider is when $p$ is odd.  In this case, take two regular $p$-gons and again observe that every side has a unique opposite parallel side to which it is parallel.  Identifying again in this case leads to a translation surface denoted $\Pi_p$.  (The parity of $p$ implicitly records which one we consider.)
\end{example}

\begin{figure}
%\centering
\begin{center}
\includegraphics[height=1.5in]{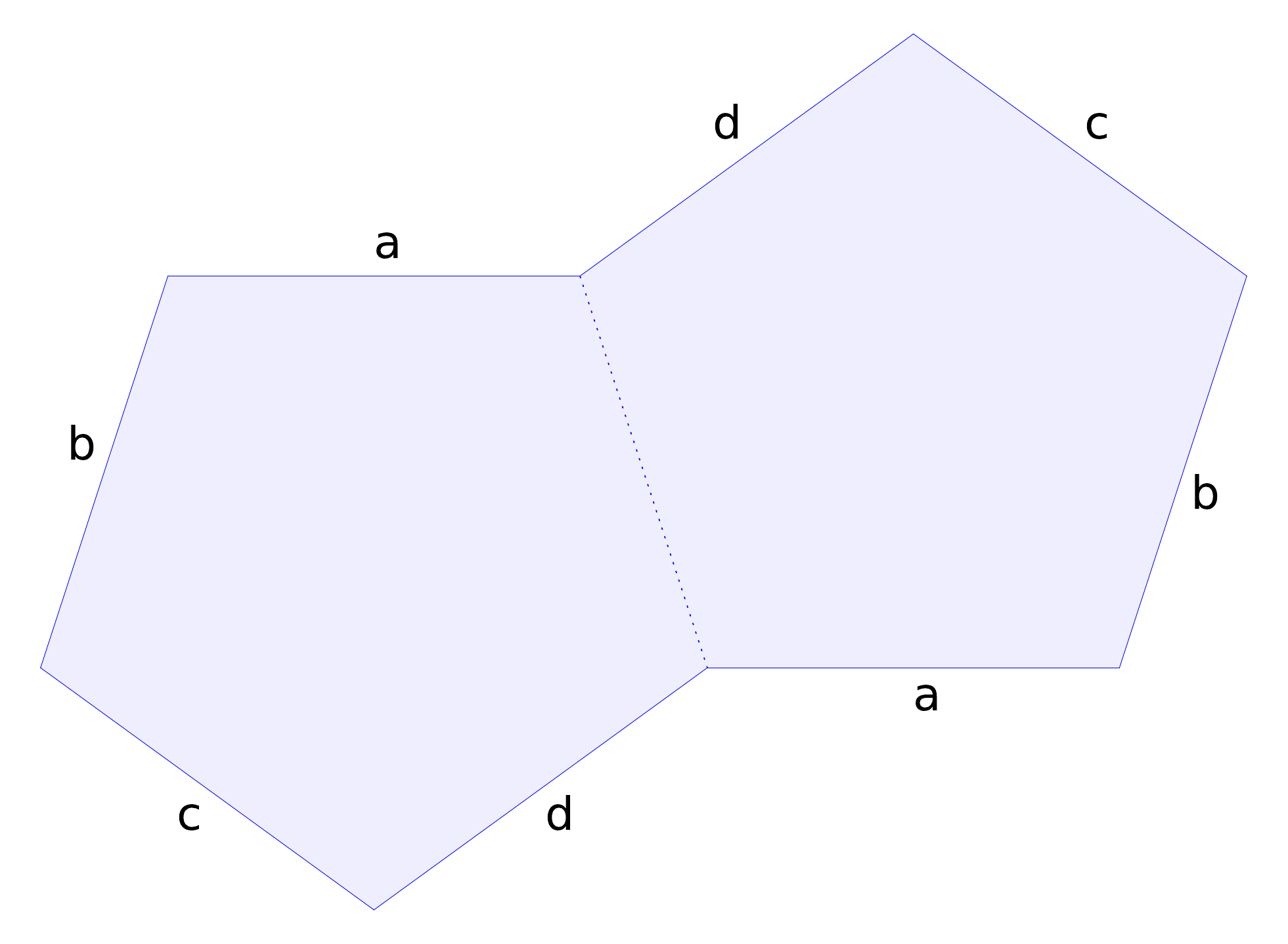}
\hfill
\includegraphics[height=1.5in]{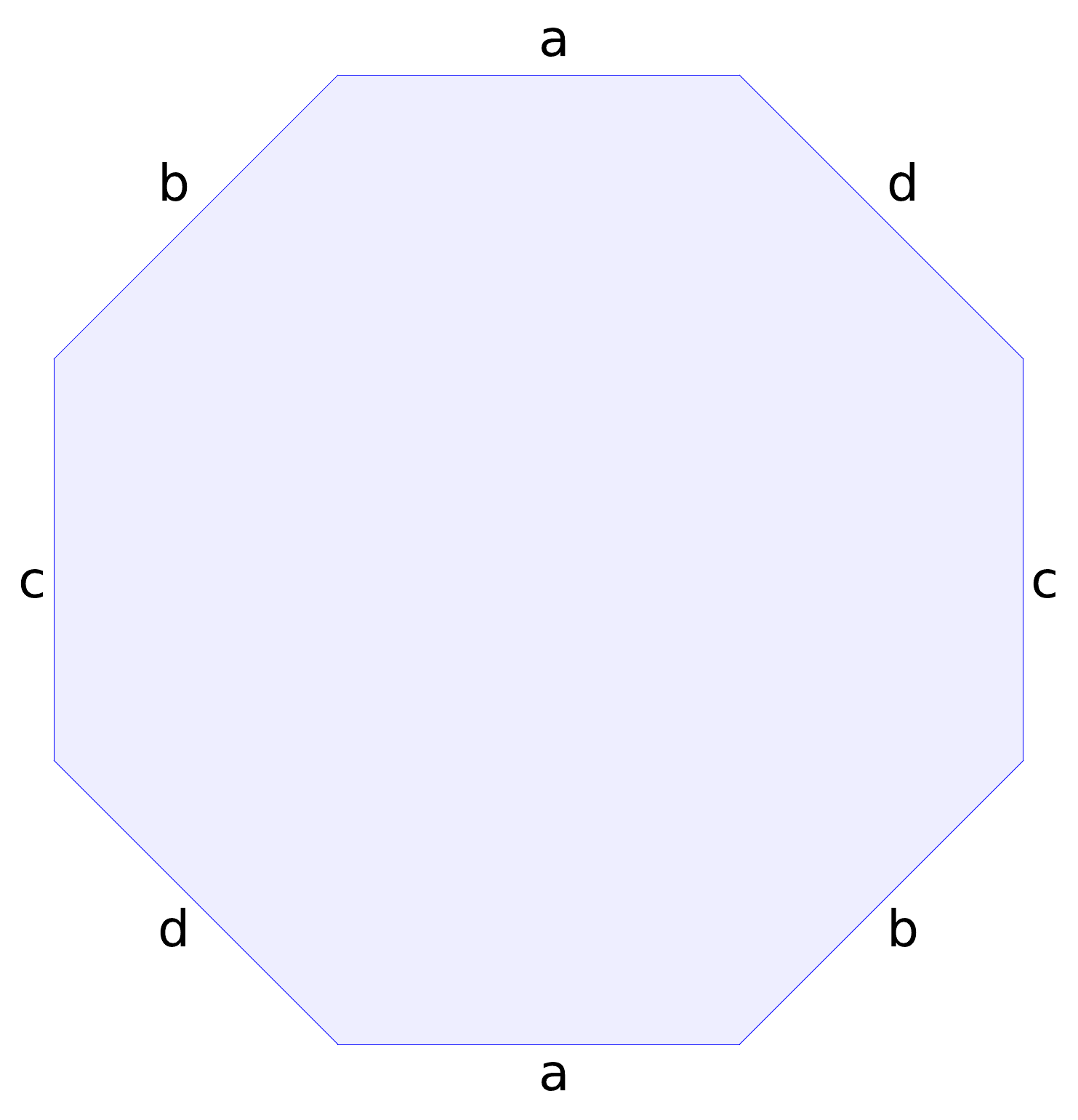}
\end{center}
\caption{The Examples $\Pi_p$: $p = 5$ (left) and $p = 8$ (right) \cite[Fig. 2]{AthreyaAulicinoHooperDodecahedron}}
\label{fig:pi}
\end{figure}

\subsubsection{Unfolding}

In this section we borrow a classical construction from the theory of rational billiards \cite{FoxKershner36} to introduce a natural subgroup of $\text{Aut}(G)$.  We observe that the holonomy on a Platonic surface most often will not lie in $2\pi \bZ$.  Nevertheless, for some $k$, it is always possible to pass to a degree $k$ branched cover of the Platonic surface such that it does.

\begin{remark}
The unfolding construction is also known as the \emph{canonical $k$-cover} \cite[$\S$ 2.1]{BCGGMkDiffs}.
\end{remark}

%Given any Platonic surface, there exists a degree $k$-cover such that the holonomy of every loop is an integral multiple of $2\pi$.\footnote{However, there are examples where the cover is trivial because $k = 1$ such as in genus one examples.}

\begin{definition}
The \emph{holonomy} of a closed curve is the change in angle as we parallel transport a tangent vector around the curve.
\end{definition}

\begin{definition}
Define the \emph{unfolding} $\tilde D$ of a Platonic surface $D$, with branched covering map $\pi_D: \tilde D \rightarrow D$, to be the smallest cover such that the holonomy of the cover lies in $2\pi \bZ$, i.e. every curve on $D$ lifts to a curve on $\tilde D$ with holonomy in $2\pi \bZ$.  Let $k$ denote the degree of $\pi_D$.
\end{definition}

We will always assume that the unfolding is a regular $p$-gon tiled surface by choosing a horizontal direction.  This is exactly the same concept as choosing a branch cut of a Riemann surface in complex analysis.

This exists and is discussed in more detail in \cite[$\S$ 2.2]{AthreyaAulicinoHooperDodecahedron}.  The idea is that by rotating a presentation of a Platonic surface appropriately, every edge is rotated into $k$ copies with a unique orientation so that each edge has a unique \emph{parallel} copy to which it can be identified.  This is exactly what is depicted in Figure \ref{BolzaSurfaceUnfold}.  An unfolding of a Platonic surface is a regular $p$-gon-tiled surface.

We observe that there is a natural branched covering $\pi_{\Pi}: \tilde D \rightarrow \Pi_p$ that actually possesses extra structure.  The covering map $\pi_{\Pi}$ induces a bijection on the set of unit vectors at a point away from the vertices of the Platonic surface.  In other words directions are well-defined modulo $2\pi$ when passing between $\tilde D$ and $\Pi_p$.  Observe that this is \emph{false} for $\pi_D$ if $k > 1$.

\subsection{Groups}

\begin{definition}
Let $D$ be a Platonic surface with automorphism group $\text{Aut}(D)$.  The \emph{rotation subgroup} $\text{Rot}(D)$ of $\text{Aut}(D)$ is the subgroup of orientation preserving automorphisms.
\end{definition}

Since $\text{Rot}(D)$ is a subgroup of $\text{Aut}(D)$ of index two, it is a normal subgroup of $\text{Aut}(D)$.

\begin{definition}
A \emph{pair}\footnote{This is dual to the concept of a dart, which is a pair $(e,v)$.} is a tuple $(f,e)$.
\end{definition}

We will primarily be concerned with the group $\text{Rot}(D)$.  Consequentially, it suffices to reduce flags to pairs because rotations are orientation preserving and the vertex would provide redundant information.

Let $p: Y \rightarrow X$ be a branched covering map of degree $n$.  There is a natural representation of the fundamental group into the symmetric group known as the monodromy representation.  The \emph{monodromy representation}
$$\psi_p : \pi_1(X, p_0) \rightarrow S_n$$
is given by numbering the sheets of the cover $Y$ by $\{1, \ldots, n\}$ and recording the induced permutation of the sheets by lifting loops in $\pi_1(X, p_0)$ to $\pi_1(Y, \tilde p_0)$.  

\begin{warning}
There is a concept known as the monodromy group in the study of regular maps.  The monodromy group defined here is \emph{not} that object.  The monodromy group here is the one commonly found in topology \cite{BredonTopologyAndGeometry}.
\end{warning}

\begin{definition}
The image of the monodromy representation $\psi_{\pi_{\Pi}}$ is the \emph{monodromy group} $\text{Mon}(\pi_{\Pi})$.
\end{definition}

In particular, the monodromy group associated to the covering map $\pi_{\Pi}$ will be of particular interest.  We summarize the two covering maps that are the subject of this paper in the following diagram.

\begin{equation}
\label{eq:diagram}
\begin{tikzcd}
 & \tilde D \arrow{dr}{\pi_{\Pi}} \arrow{dl}[swap]{\pi_D}\\
D  & & \Pi_p
\end{tikzcd}
\end{equation}

\subsection{Previous Results}

The key theorem from \cite{AthreyaAulicinoHooperDodecahedron} that serves as the starting point for this paper is the following.

\begin{theorem}[\cite{AthreyaAulicinoHooperDodecahedron}, Thm. 4.5]
\label{NormCov}
The covering $\pi_{\Pi}: \tilde D \rightarrow \Pi_p$ is normal.  Furthermore, $\tilde D$ is a Platonic surface, i.e. the group of orientation preserving isometries of $\tilde D$, denoted $\text{Rot}(\tilde D)$, is transitive on pairs.%\footnote{In \cite[Thm. 4.5]{AthreyaAulicinoHooperDodecahedron}, this is phrased as simply saying that $\tilde D$ is a Platonic surface.}
\end{theorem}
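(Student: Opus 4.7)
The plan is to split the statement into its two components, normality of $\pi_\Pi$ and pair-transitivity of $\text{Rot}(\tilde D)$, and to exploit throughout the explicit flat geometric description of $\tilde D$ that comes from the unfolding construction.

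For normality of $\pi_\Pi$, I would first pin down a concrete model of $\tilde D$. Having fixed a horizontal direction, every face $\tilde f$ of $\tilde D$ acquires a well-defined rotation angle $\theta(\tilde f) \in \bR/2\pi\bZ$ recording how its standard presentation sits relative to horizontal. By the definition of the unfolding these angles all lie in the finite cyclic subgroup $C_k \subset \bR/2\pi\bZ$ generated by a single rotation of order $k$, the degree of the cover. The map $\pi_\Pi$ is then literally the operation of rotating each face back to horizontal and identifying it with the corresponding face of $\Pi_p$. Now the global rotation of the flat structure of $\tilde D$ by a generator of $C_k$ is a homeomorphism of $\tilde D$ that commutes with $\pi_\Pi$, because on $\Pi_p$ the rotation is realized by the obvious rotational symmetry of the $p$-gon or double $p$-gon. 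This produces $k$ distinct deck transformations for a cover of degree $k$, so $\pi_\Pi$ is normal with $\text{Mon}(\pi_\Pi) \cong C_k$.

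For pair-transitivity of $\text{Rot}(\tilde D)$, I would combine lifts of rotations of $D$ with deck transformations of $\pi_D$. Given two pairs $(\tilde f_1, \tilde e_1)$ and $(\tilde f_2, \tilde e_2)$ of $\tilde D$, their images $(f_1, e_1)$ and $(f_2, e_2)$ on $D$ are related by some $g \in \text{Rot}(D)$ because $D$ is Platonic. The subgroup $\pi_1(\tilde D) \subset \pi_1(D)$ classifying the cover $\pi_D$ is intrinsically the kernel of the rotational holonomy representation, so any orientation-preserving isometry of $D$ preserves it; consequently $g$ admits an orientation-preserving lift $\tilde g$ to $\tilde D$. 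Then $\tilde g(\tilde f_1, \tilde e_1)$ lies in the $\pi_D$-fiber above $(f_2, e_2)$, and since the deck group of $\pi_D$ acts transitively on each such fiber, post-composing with a suitable deck transformation moves it to $(\tilde f_2, \tilde e_2)$.

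The main obstacle is the lifting step in the second part: verifying that every $g \in \text{Rot}(D)$ lifts to an orientation-preserving isometry of $\tilde D$. Concretely, one must check that the subgroup $\pi_1(\tilde D) \subset \pi_1(D)$ is preserved under pushforward by $g_*$, and that the resulting lift---which is automatically a local Euclidean isometry face by face---is globally orientation preserving. Both reduce to careful bookkeeping of oriented rotational holonomy under automorphisms of $D$, and this is where most of the technical work sits; the remaining pieces then assemble cleanly from the cyclic model of $C_k$ on $\tilde D$ produced in the first step.
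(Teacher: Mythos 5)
First, a point of comparison: the paper does not prove this statement at all --- it is imported verbatim as \cite{AthreyaAulicinoHooperDodecahedron}, Thm.~4.5 --- so there is no internal proof to measure your argument against. Judging it on its own terms, your second half (pair-transitivity of $\text{Rot}(\tilde D)$) is a sound outline: you lift $g \in \text{Rot}(D)$ through $\pi_D$ using the fact that $\pi_1(\tilde D)$ is the kernel of the rotational holonomy homomorphism (preserved under $g_*$ because $SO(2)$ is abelian), then correct by a deck transformation of $\pi_D$, which acts transitively on fibers since $\pi_D$ corresponds to the kernel of a homomorphism onto a cyclic group and is therefore normal. You rightly flag the lifting step as the technical heart.

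The first half, however, contains a genuine error: you have swapped the roles of the two coverings in diagram \eqref{eq:diagram}. The degree of $\pi_{\Pi}: \tilde D \rightarrow \Pi_p$ is not $k$ but $km$ (or $km/2$ for odd $p$), where $m$ is the number of faces of $D$; the integer $k$ is the degree of $\pi_D$. Moreover $\tilde D$ is a translation surface, so all of its faces sit in the plane with the \emph{same} orientation and $\pi_{\Pi}$ is locally a translation; the global rotation $R$ by $2\pi/k$ that you construct satisfies $\pi_D \circ R = \pi_D$, i.e., it is a deck transformation of $\pi_D$, not of $\pi_{\Pi}$. Indeed you observe yourself that $R$ covers a \emph{nontrivial} rotation of $\Pi_p$ --- that makes it a lift of that rotation, not a lift of the identity, so it is not a deck transformation of $\pi_{\Pi}$. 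Consequently the conclusion $\text{Mon}(\pi_{\Pi}) \cong C_k$ is false: for the dodecahedron ($\{5,3\}$, $k = 10$) the paper computes $|\text{Mon}(\pi_{\Pi})| = 60$ (the group is $A_5$), and for the Bolza surface the order is $48$; neither is cyclic of order $k$. The deck group of $\pi_{\Pi}$ is the generally non-abelian group of translation automorphisms of $\tilde D$, and normality must be established by showing that this group acts transitively on fibers --- for instance by first proving pair-transitivity of $\text{Rot}(\tilde D)$ and then noting that a rotation of $\tilde D$ carrying one pair to a parallel pair in the same $\pi_{\Pi}$-fiber has trivial derivative and hence commutes with $\pi_{\Pi}$. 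The logical order is thus the reverse of yours: normality follows from transitivity, rather than being provable independently by exhibiting a small cyclic group of symmetries.
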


\section{Proofs of the Main Results}
\label{MainProofsSect}

If $p: Y \rightarrow X$, then let $\text{Deck}(p)$ denote the group of deck transformations of $Y$ with respect to the covering map $p$.

%By Theorem \ref{NormCov}, $\pi$ is a normal cover, which implies that
%$$\text{Deck}\left( \pi_{\Pi_n} \right) \cong \pi_1(\Pi_N, p_0) / \pi_1(\tilde D, \tilde p_0).$$

%This proof of the following proposition is known in the literature, but we record it here for the convenience of the reader.  REFERENCES?

\begin{lemma}
\label{DiagCommutes}
Let $\pi: Y \rightarrow X$ be a normal covering of Platonic surfaces $Y$ and $X$.  Given an orientation preserving isometry $F: X \rightarrow X$, there exists an automorphism $\tilde F: Y \rightarrow Y$ such that the diagram below commutes.
\end{lemma}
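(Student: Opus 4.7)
The plan is to construct the lift $\tilde F$ by hand using the pair-transitivity of the rotation group of a Platonic surface, rather than invoking the standard homotopy-theoretic lifting criterion. By hypothesis $Y$ is Platonic, so (by the principle articulated in Theorem \ref{NormCov}) $\text{Rot}(Y)$ acts transitively on pairs $(f,e)$, and an orientation preserving isometry of $Y$ is uniquely determined by its image on a single pair. I will use this rigidity to pin down $\tilde F$.

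Concretely, I would first fix a pair $(f_X, e_X)$ of $X$ and choose a pair $(f_Y, e_Y)$ of $Y$ that projects to $(f_X, e_X)$ under $\pi$. This is possible because $\pi$ is a covering that restricts to a local isometry away from the cone points, so it sends pairs of $Y$ surjectively onto pairs of $X$, both surfaces being tiled by congruent regular $p$-gons. Applying $F$ produces a pair $F(f_X, e_X)$ of $X$, which I would lift to a pair $(f_Y', e_Y')$ of $Y$. By pair-transitivity of $\text{Rot}(Y)$ there is then a unique $\tilde F \in \text{Rot}(Y)$ sending $(f_Y, e_Y)$ to $(f_Y', e_Y')$.

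It remains to verify that $\pi \circ \tilde F = F \circ \pi$. By construction these two maps $Y \rightarrow X$ agree on $(f_Y, e_Y)$. Away from the cone loci of $Y$ and the preimages of the cone locus of $X$, both are local isometries between flat cone surfaces, and any local isometry is determined near a regular point by its value and differential there, which is exactly the data of a pair. Hence the agreement locus is open and closed in the regular locus of $Y$; by connectedness and density the two maps coincide on all of $Y$.

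The main subtlety — more a matter of bookkeeping than a real obstacle — is that the auxiliary choice of lift $(f_Y', e_Y')$ is not canonical: different choices within the $\pi$-fiber produce candidate $\tilde F$'s that differ by deck transformations of $\pi$. Since the lemma asserts only existence, any such choice works. The substantive content lies in the uniqueness-of-local-isometry step, which relies on $\pi$ locally identifying the $p$-gon tilings of $Y$ and $X$ and on connectedness of the regular locus.
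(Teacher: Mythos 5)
Your proposal is correct and follows essentially the same route as the paper: choose a pair on $X$ and its image under $F$, lift both to pairs on $Y$, and use pair-transitivity of $\text{Rot}(Y)$ to produce $\tilde F$. You actually supply more detail than the paper does on the final step (verifying $\pi \circ \tilde F = F \circ \pi$ via rigidity of local isometries and connectedness), which the paper simply asserts.
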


\begin{center}
\begin{tikzcd}
Y \arrow[r, dashed, "\tilde F"] \arrow[d, "\pi"]
& Y \arrow[d, "\pi"] \\
X \arrow[r, "F"]
& X
\end{tikzcd}
\end{center}

%The issue you are discussing seems to have to do with that if you have a surface homeomorphism which does not fix the basepoint then what is the lifting criterion. 

%Let  p: Y -> X is a cover and f: X -> X is a map. Let x0 be the basepoint. Then x0'=f(x0) may not be x0. Choose a lift y0 and y0' of the two points. I think the lifting question should be whether there is a lift of f so that f(y0)=y0'. Let beta be a curve joining y0 to y0', and let alpha be its projection to X which joins x0 to x0'. Then there is a natural map f* of the fundamental group of X to itself given by gamma  is mapped to  alpha * (f o gamma) * alpha^-1 where * denotes path concatenation (from left to right). I believe f lifts as described if and only if f* stabilizes the fundamental group of Y viewed as a subgroup of the fundamental group of x.

%Because the canonical k-covering is an abelian cover (i.e., a regular cover with abelian Deck group), it suffices to work with homology rather than the fundamental group. This would allow you to avoid picking path as above.

\begin{proof}
Given $F: X \rightarrow X$, choose a pair $(P_0, e_0)$ on $X$ and its image $(P_1, e_1)$ under $F$.  Choose lifts of each pair on $Y$ so that $(P_i, e_i)$ lifts to $(\tilde P_i, \tilde e_i)$ for $i \in \{0,1\}$.  Since $Y$ is a Platonic surface, there is an isometry $\tilde F$ such that $\tilde F(\tilde P_0, \tilde e_0) = (\tilde P_1, \tilde e_1)$.  Therefore, $\tilde F$ is a lift of $F$.
%By Theorem \ref{NormCov}, there is an isometry $\tilde F$ such that $\tilde F(\tilde P_0, \tilde e_0) = (\tilde P_1, \tilde e_1)$.  Therefore, $\tilde F$ is a lift of $F$.
\end{proof}

\begin{proposition}
\label{CovMapIndSurjHomomor}
Let $\pi: Y \rightarrow X$ be a normal covering of Platonic surfaces $Y$ and $X$.  The covering map $\pi$ induces a surjective homomorphism
$$\phi_{\pi}: \text{Rot}(Y) \rightarrow \text{Rot}(X),$$
and
$$\text{ker}(\phi_{\pi}) = \text{Deck}(\pi) \cong \text{Mon}(\pi).$$
\end{proposition}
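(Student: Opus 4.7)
\bigskip
\noindent\textbf{Proof proposal.} The plan is to construct $\phi_\pi$ by descending isometries of $Y$ through $\pi$ on a single pair, and to use pair-transitivity on both surfaces (recall that $Y$ is a Platonic surface by Theorem \ref{NormCov}, and $X$ is by hypothesis) together with rigidity of local isometries to verify well-definedness.

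Concretely, the first step is to fix a pair $(\tilde P_0, \tilde e_0)$ on $Y$, set $(P_0,e_0) = (\pi(\tilde P_0), \pi_*(\tilde e_0))$, and for any $\tilde F \in \text{Rot}(Y)$ let $(\tilde P_1, \tilde e_1) = \tilde F(\tilde P_0, \tilde e_0)$ and $(P_1,e_1) = (\pi(\tilde P_1), \pi_*(\tilde e_1))$. Because $\text{Rot}(X)$ is transitive on pairs and because any orientation-preserving isometry of a Platonic surface is uniquely determined by its image on a single pair, there is a unique $F \in \text{Rot}(X)$ sending $(P_0,e_0)$ to $(P_1,e_1)$; define $\phi_\pi(\tilde F) := F$. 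The second step is to verify the intertwining relation $\pi \circ \tilde F = F \circ \pi$: both sides are local isometries $Y \to X$ that agree on the pair $(\tilde P_0, \tilde e_0)$, so by analytic continuation (rigidity of local isometries on a connected flat surface with isolated cone points) they agree everywhere. Once this relation is in hand, the homomorphism property is immediate: if $\pi \tilde F_i = F_i \pi$ for $i=1,2$, then $\pi \tilde F_1 \tilde F_2 = F_1 F_2 \pi$, and uniqueness of the descent forces $\phi_\pi(\tilde F_1 \tilde F_2) = F_1 F_2$. Surjectivity is immediate from Lemma \ref{DiagCommutes}, once we observe that the lift produced there can be taken to be orientation preserving (since $F$ is and $\pi$ is orientation preserving).

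For the kernel, $\tilde F \in \ker(\phi_\pi)$ is equivalent to $F = \text{id}_X$, which by the commuting diagram is equivalent to $\pi \circ \tilde F = \pi$, i.e.\ $\tilde F \in \text{Deck}(\pi)$. The final isomorphism $\text{Deck}(\pi) \cong \text{Mon}(\pi)$ is a standard consequence of the covering being normal: in general $\text{Deck}(\pi) \cong N_{\pi_1(X)}(\pi_*\pi_1(Y))/\pi_*\pi_1(Y)$ and $\text{Mon}(\pi) \cong \pi_1(X)/\pi_*\pi_1(Y)$ as groups acting on a generic fiber; normality of the cover (Theorem \ref{NormCov}) makes $\pi_*\pi_1(Y)$ normal in $\pi_1(X)$, identifying the two quotients.

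The step I expect to require the most care is the well-definedness of $\phi_\pi$, i.e.\ showing that the $F$ constructed from one pair actually satisfies $\pi \tilde F = F \pi$ globally. This rests on the uniqueness of isometries of Platonic surfaces determined by their action on a single pair, and on the fact that $\pi$ is a local isometry away from branch points, so that the two maps $\pi\tilde F$ and $F\pi$ agreeing on a tangent vector at one regular point forces them to agree on all of $Y$. The other potential subtlety is checking orientability of the lift in Lemma \ref{DiagCommutes} to conclude surjectivity, which follows because both $\pi$ and $F$ preserve orientation, forcing any lift $\tilde F$ to do the same.
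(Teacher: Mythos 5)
Your proposal is correct, but it routes the crucial well-definedness step differently from the paper. The paper defines $F$ from an arbitrary pair $(\tilde f_1,\tilde e_1)$ and then shows the result is independent of that choice by lifting $F$ back up via Lemma \ref{DiagCommutes} and using that $\text{Deck}(\pi)$ acts transitively on fibers (this is where normality of $\pi$ is invoked) to conjugate the lift into $\tilde F$ itself. You instead fix one base pair and prove the global intertwining relation $\pi\circ\tilde F = F\circ\pi$ directly, by observing that both sides are local isometries $Y\to X$ agreeing to first order at one regular point and invoking rigidity of local isometries on a connected flat surface with isolated cone points. Both arguments are valid, and yours is arguably cleaner: it makes the homomorphism property and the kernel computation ($F=\text{id}$ iff $\pi\tilde F=\pi$ iff $\tilde F\in\text{Deck}(\pi)$) fall out of the intertwining relation with no further work, and it reveals that normality of the cover is not actually needed to construct $\phi_\pi$ --- only pair-transitivity of $X$ is, with normality entering solely in the final identification $\text{Deck}(\pi)\cong\text{Mon}(\pi)$ (which the paper handles the same way you do, citing the standard covering-space fact, in Lemma \ref{GamMonSubgrpGamRotTildeD}). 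The one point you leave implicit, as does the paper, is that every deck transformation is automatically an orientation-preserving isometry preserving the pair structure (so that $\text{Deck}(\pi)$ really sits inside $\text{Rot}(Y)$ and the kernel equals all of $\text{Deck}(\pi)$ rather than a proper subgroup); this follows because $\pi\delta=\pi$ with $\pi$ an orientation-preserving local isometry forces $\delta$ to be one as well, and is worth a sentence.
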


\begin{proof}
We construct the map $\phi_{\pi}$ explicitly.  Given $\tilde F \in \text{Rot}(Y)$, choose a pair $(\tilde f_1, \tilde e_1)$ on $Y$ and let $\tilde F(\tilde f_1, \tilde e_1) = (\tilde f_2, \tilde e_2)$.  We abuse notation and write $\pi(f,e)$ to mean $(\pi(f), \pi(e))$.  Now consider the pairs $(f_i, e_i) = \pi(\tilde f_i, \tilde e_i)$, for $i = 1,2$.  This defines a unique rotation of $X$ by letting $F$ be the unique rotation with the property that $F(f_1, e_1) = (f_2, e_2)$.  Use Lemma \ref{DiagCommutes} to lift $F$ to a rotation $\tilde F_0 \in \text{Rot}(Y)$.  For convenience, let $\Delta = \text{Deck}(\pi)$.  Furthermore, $\tilde F_0$ maps $(\tilde f_1', \tilde e_1')$ to $(\tilde f_2', \tilde e_2')$, where $(\tilde f_i', \tilde e_i')$ is a lift of $(f_i, e_i)$ for $i = 1,2$.  Then because the lift is unique up to action by the deck group, and the deck group is transitive because $\pi$ is normal, for $i = 1,2$ there exist elements $\delta_i \in \Delta$ such that $\delta_i(\tilde f_i', \tilde e_i') = (\tilde f_i, \tilde e_i)$.  Therefore, the map $\delta_2 \circ \tilde F_0 \circ \delta_1$ is a rotation of $Y$ that sends $\tilde F(\tilde f_1, \tilde e_1) = (\tilde f_2, \tilde e_2)$.  Since the action of a rotation on a pair uniquely determines the rotation, we have $\delta_2 \circ \tilde F_0 \circ \delta_1 = \tilde F$.  This implies that the construction of $F$ given $\tilde F$ did not depend on the initial choice of pair.  Therefore, $\phi_{\pi}$ is a well-defined map.  The fact that $\phi_{\pi}$ is a homomorphism follows from the following composition of commutative diagrams.
\begin{center}
\begin{tikzcd}
Y \arrow[r, "\tilde F_1"] \arrow[d, "\pi"] 
& Y \arrow[r, "\tilde F_2"] \arrow[d, "\pi"] 
& Y \arrow[d, "\pi"] \\
X \arrow[r, "F_1"] & X \arrow[r, "F_2"] & X
\end{tikzcd}
\end{center}

Finally, the kernel of $\phi_{\pi}$ consists of exactly those maps such that $\pi(f_1, e_1) = \pi(f_2, e_2)$.  However, this is exactly the condition that two maps differ by a deck transformation.  Surjectivity follows from considering lifts of all possible pairs of pairs $(f,e)$ on $X$.
\end{proof}

Observe that Theorem \ref{CovMapIndSurjHomomor} applies to $\phi_{\Pi}$ because $\Pi_p$ is indeed a Platonic surface and by Theorem \ref{NormCov}, $\pi_{\Pi}$ is a normal cover and $\tilde D$ is a Platonic surface.  Theorem \ref{CovMapIndSurjHomomor} also applies to $\phi_D$ because $\tilde D$ is a Platonic surface by Theorem \ref{NormCov}.

\begin{comment}
\begin{corollary}
\label{MonGrpKernel}
The covering map $\pi_{\Pi}$ induces a surjective homomorphism
$$\phi_{\Pi}: \text{Rot}(\tilde D) \rightarrow \text{Rot}(\Pi_p),$$
and
$$\text{ker}(\phi_{\Pi}) = \text{Mon}(\pi_{\Pi}).$$
In particular, the monodromy group is a normal subgroup of $\text{Rot}(\tilde D)$.
\end{corollary}

\begin{proof}
The first elementary observation is that $\Pi_p$ is a Platonic surface.  Indeed, rotation by $2\pi/p$ or $2\pi/(2p)$, depending on the parity of $p$, preserve $\Pi_p$.  By Theorem \ref{NormCov}, $\pi_{\Pi}$ is a normal cover, and $\tilde D$ is a Platonic surface, so Proposition \ref{CovMapIndSurjHomomor} applies.
\end{proof}

\begin{corollary}
\label{CovMapsInduceRotGrpMaps}
The covering map $\pi_D$ induces a surjective homomorphism
$$\phi_D: \text{Rot}(\tilde D) \rightarrow \text{Rot}(D),$$
and
$$\text{ker}(\phi_D) = \text{Mon}(\pi_D).$$
\end{corollary}

\end{comment}

\begin{lemma}
\label{GamMonSubgrpGamRotTildeD}
There is an injective homomorphism
$$\tilde \Psi: \text{Mon}\left(\pi_{\Pi}\right) \hookrightarrow \text{Rot}(\tilde D).$$
\end{lemma}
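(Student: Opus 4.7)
The main point is that we have already done most of the work: Proposition \ref{CovMapIndSurjHomomor} (combined with Theorem \ref{NormCov}, which guarantees $\pi_{\Pi}$ is normal and $\tilde D$ is Platonic) tells us that $\text{Deck}(\pi_{\Pi}) \cong \text{Mon}(\pi_{\Pi})$ and that $\text{Deck}(\pi_{\Pi})$ sits inside $\text{Rot}(\tilde D)$ as the kernel of $\phi_{\pi_{\Pi}}$. Thus the plan is to \emph{define} $\tilde \Psi$ as the composition
\[
\text{Mon}(\pi_{\Pi}) \xrightarrow{\;\;\cong\;\;} \text{Deck}(\pi_{\Pi}) \hookrightarrow \text{Rot}(\tilde D),
\]
where the first arrow is the standard monodromy-deck isomorphism for a normal covering and the second arrow is inclusion. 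Since both are homomorphisms and both are injective, $\tilde \Psi$ is an injective homomorphism.

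The one thing that is not literally contained in the preceding results, and which I would need to verify, is that a deck transformation of $\pi_{\Pi}$ is actually an element of $\text{Rot}(\tilde D)$, i.e.\ an orientation preserving isometry rather than just a homeomorphism. First I would observe that the covering map $\pi_{\Pi}: \tilde D \to \Pi_p$ is a local isometry: by construction the $p$-gons tiling $\tilde D$ are mapped isometrically onto the $p$-gons (or the single $p$-gon/double $p$-gon) making up $\Pi_p$, away from the vertices, and the vertex set is a discrete removable set. Any deck transformation $\delta$ satisfies $\pi_{\Pi} \circ \delta = \pi_{\Pi}$, so locally $\delta = \pi_{\Pi}^{-1} \circ \pi_{\Pi}$ for appropriate local inverses of $\pi_{\Pi}$, making $\delta$ a local isometry. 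By connectedness of $\tilde D$ it is then a global isometry.

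Next I would argue $\delta$ is orientation preserving. This follows from the fact that $\pi_{\Pi}$ is orientation preserving (both $\tilde D$ and $\Pi_p$ come with the orientation inherited from the Euclidean $p$-gons and the identifications, and $\pi_{\Pi}$ respects this), so the identity $\pi_{\Pi} \circ \delta = \pi_{\Pi}$ forces $\delta$ to preserve orientation as well. Equivalently, one checks on a single flag: a deck transformation moves a flag on $\tilde D$ to another flag lying over the same flag in $\Pi_p$, and since $\pi_{\Pi}$ is a direction-preserving cover (as noted in the paragraph following the definition of unfolding), the local frame is preserved. Hence $\delta \in \text{Rot}(\tilde D)$.

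The main, and essentially only, potential obstacle is the orientation-preserving verification; it is conceptually clear from the translation-surface structure but requires being a little careful about what happens at the branch points (the cone points of $\tilde D$). This is handled by working on the complement of the cone set, which is connected and where $\pi_{\Pi}$ is a genuine local isometry of oriented surfaces, and then extending continuously across the discrete branch set. With this in place, the composition displayed above is the desired injective homomorphism $\tilde \Psi$.
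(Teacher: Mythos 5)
Your proposal is correct and follows essentially the same route as the paper: invoke the standard isomorphism $\text{Mon}(\pi_{\Pi}) \cong \text{Deck}(\pi_{\Pi})$ for the normal covering $\pi_{\Pi}$ (via Theorem \ref{NormCov}) and then observe that deck transformations are orientation-preserving isometries of $\tilde D$, hence lie in $\text{Rot}(\tilde D)$. If anything, your verification that deck transformations are local (hence global) isometries and preserve orientation is more careful than the paper's one-line justification, so no changes are needed.
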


\begin{proof}
Since $\pi_{\Pi}$ is a normal covering by Theorem \ref{NormCov}, it follows from \cite[Prop. 2.10]{DydakFundGrpsNewApp} or \cite[Thm. 7.2]{MasseyBasicAlgTop} that $\text{Mon}(\pi_{\Pi})$ is isomorphic to $\text{Deck}(\pi_{\Pi})$.  However, the deck transformations of $\pi_{\Pi}$ are naturally isometries of $\tilde D$ because they are homeomorphisms of $\tilde D$ that interchange sheets of $\pi_{\Pi}$ by definition.  Hence, they form a subgroup of $\text{Rot}(\tilde D)$.
\end{proof}

\begin{lemma}
\label{KerPhiDRotEdges}
Let $\delta \in \text{ker}(\phi_D)$ such that for some pair $(f,e)$, $\delta$ sends $(f,e)$ to $(f', e')$.  Let $\theta$ be the angle between $e$ and $e'$ on $\tilde D$.  Then $\theta \equiv 0 \mod 2\pi$ if and only if $\delta = \text{Id}$.
\end{lemma}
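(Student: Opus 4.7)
The direction $\delta = \text{Id} \Rightarrow \theta \equiv 0 \mod 2\pi$ is immediate, so the work lies in the converse. My plan is to exploit both coverings in the diagram \eqref{eq:diagram} simultaneously.

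First, I would record that $\tilde D$ is a regular $p$-gon-tiled surface, hence carries a flat structure with trivial rotational holonomy. As a consequence, any orientation-preserving isometry of $\tilde D$ has a globally constant derivative in $\text{SO}(2)$. Applied to $\delta$, this tells us that the angle $\theta$ between $e$ and $e'$ is in fact the uniform rotation angle by which $\delta$ rotates every tangent vector on $\tilde D$.

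Next, I would push $\delta$ forward along $\pi_{\Pi}$. Because $\pi_{\Pi}$ induces a bijection of unit tangent vectors modulo $2\pi$ (the key property of the unfolding emphasized in Section \ref{PreliminariesSect}), the image $\phi_{\Pi}(\delta) \in \text{Rot}(\Pi_p)$ rotates $\Pi_p$ by the same angle $\theta$. The group $\text{Rot}(\Pi_p)$ is cyclic of order $p$ or $2p$, and its elements are determined by their rotation angle, so the hypothesis $\theta \equiv 0 \mod 2\pi$ forces $\phi_{\Pi}(\delta) = \text{Id}$. Thus $\delta \in \ker(\phi_D) \cap \ker(\phi_{\Pi})$.

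Finally, I would evaluate on the pair $(f,e)$. The image $(f',e')$ shares its $\pi_D$-class and its $\pi_{\Pi}$-class with $(f,e)$. Direction preservation of $\pi_{\Pi}$ implies that $f$ and $f'$ are parallel congruent $p$-gons in the flat coordinates of $\tilde D$; meanwhile $\pi_D(f) = \pi_D(f')$ places them in the same $\pi_D$-fiber. The $k$ faces in that fiber are rotated versions of one another by the $k$ distinct elements of the rotational holonomy group of $D$, so parallelism forces $f = f'$; then $e = e'$ since the $p$ oriented sides of a regular $p$-gon point in $p$ pairwise distinct directions. So $\delta$ fixes the pair $(f,e)$, and simple transitivity of $\text{Rot}(\tilde D)$ on pairs (Theorem \ref{NormCov}) yields $\delta = \text{Id}$.

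The main obstacle I expect is the last step: verifying that the $k$ faces in a single $\pi_D$-fiber carry $k$ pairwise distinct orientations in $\mathbb{R}^2$. This is exactly the statement that the map $\text{Deck}(\pi_D) \to \text{SO}(2)$ sending a deck transformation to its constant derivative is injective, which follows from the construction of $\pi_D$ as the cover associated to the kernel of the rotational holonomy representation of $D$; I would cite \cite[\S 2.2]{AthreyaAulicinoHooperDodecahedron} for this structural fact rather than reprove it in full.
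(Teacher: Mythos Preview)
Your proposal is correct and rests on the same key fact the paper uses: the identification $\ker(\phi_D) = \text{Deck}(\pi_D)$ together with the injectivity of the derivative map $\text{Deck}(\pi_D) \to \text{SO}(2)$, which follows from the construction of the unfolding. The paper's proof is a two-line version of exactly this: it notes that $\text{Deck}(\pi_D)$ consists of the $k$ rotations by multiples of $2\pi/k$, so only the identity leaves edge directions unchanged.

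Your middle step --- pushing $\delta$ through $\phi_{\Pi}$ to land in $\ker(\phi_D)\cap\ker(\phi_{\Pi})$ --- is superfluous. Once you have established in step~1 that $\delta$ has constant derivative equal to rotation by $\theta$, the hypothesis $\theta\equiv 0$ already gives you the parallelism of $f$ and $f'$ directly; you never actually use $\delta\in\ker(\phi_{\Pi})$ in your final step. Stripping that detour out, your argument collapses to: $\delta\in\text{Deck}(\pi_D)$ has trivial derivative, and the derivative map is injective on $\text{Deck}(\pi_D)$, hence $\delta=\text{Id}$ --- which is precisely the paper's proof.
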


\begin{proof}
This is trivial if $\delta = \text{Id}$.  Observe that $\text{ker}(\phi_D)$ is exactly the deck group of the degree $k$ covering $\pi_D: \tilde D \rightarrow D$.  By definition, this covering consists of all rotations of $D$ by angle $2\pi/k$.  Hence, only the trivial element of the deck group does not rotate edges.
\end{proof}

The main theorem now follows from elementary results from finite group theory.  The reader is advised to consult the following diagram throughout the proof.

\begin{equation*}
\begin{tikzcd}[column sep=small]
& \text{Mon}(\pi_{\Pi}) \arrow[ddl, dashed, hook, bend right,
"\Psi" '] \arrow[d, hook, "\tilde{\Psi}"] & \\
 & \text{Rot}(\tilde{D}) \arrow[dl, two heads, "\phi_D" ' ]
 \arrow[dr, two heads, "\phi_{\Pi}" ] & \\
 \text{Rot}(D) & & \text{Rot}(\Pi)
\end{tikzcd}
\end{equation*}

\begin{theorem}
\label{MonGrpNormSubgrpRot}
The monodromy group $\text{Mon}(\pi_{\Pi})$ is isomorphic to a normal subgroup of $\text{Rot}(D)$.  Furthermore, the group
$$\text{Rot}(D)/\phi_D\left(\text{Mon}(\pi_{\Pi})\right)$$
is cyclic.
\end{theorem}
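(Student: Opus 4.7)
The plan is to unpack the diagram displayed just before the theorem. We already know from Proposition \ref{CovMapIndSurjHomomor} that $\phi_{\Pi}$ and $\phi_D$ are surjective homomorphisms with kernels $\mathrm{Mon}(\pi_{\Pi})$ and $\mathrm{Mon}(\pi_D)$ respectively, and from Lemma \ref{GamMonSubgrpGamRotTildeD} that $\tilde\Psi$ realizes $\mathrm{Mon}(\pi_{\Pi})$ as a concrete normal subgroup of $\mathrm{Rot}(\tilde D)$. The map $\Psi$ in the diagram is by definition $\phi_D\circ\tilde\Psi$, so I would break the theorem into three bite-sized claims.

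\textbf{Step 1 (Normality).} Because $\phi_{\Pi}$ is a homomorphism, $\mathrm{Mon}(\pi_{\Pi})=\ker\phi_{\Pi}$ is normal in $\mathrm{Rot}(\tilde D)$. Since the homomorphic image of a normal subgroup under a \emph{surjective} homomorphism is normal, applying $\phi_D$ (which is surjective onto $\mathrm{Rot}(D)$) yields that $\phi_D(\mathrm{Mon}(\pi_{\Pi}))$ is normal in $\mathrm{Rot}(D)$.

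\textbf{Step 2 (Injectivity of $\Psi$).} This is where I expect the subtle point to sit. To identify $\mathrm{Mon}(\pi_{\Pi})$ with a subgroup of $\mathrm{Rot}(D)$, I need $\Psi=\phi_D\circ\tilde\Psi$ injective, i.e. $\tilde\Psi(\mathrm{Mon}(\pi_{\Pi}))\cap\ker\phi_D=\{\mathrm{Id}\}$ inside $\mathrm{Rot}(\tilde D)$. Let $\delta$ lie in this intersection. As an element of $\mathrm{Deck}(\pi_{\Pi})$, $\delta$ satisfies $\pi_{\Pi}\circ\delta=\pi_{\Pi}$; because $\pi_{\Pi}$ is direction-preserving mod $2\pi$ (this is the extra structure of the unfolding covering emphasized in Section \ref{PreliminariesSect}), $\delta$ must send every edge to a parallel edge, i.e.\ the angle $\theta$ between $e$ and $\delta(e)$ on $\tilde D$ is $\equiv 0\bmod 2\pi$. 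Simultaneously $\delta\in\ker\phi_D$, so Lemma \ref{KerPhiDRotEdges} forces $\delta=\mathrm{Id}$. Hence $\Psi$ is injective and $\phi_D(\mathrm{Mon}(\pi_{\Pi}))\cong\mathrm{Mon}(\pi_{\Pi})$, proving the first assertion together with Step 1.

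\textbf{Step 3 (Cyclic quotient).} Consider the composition
\[
\mathrm{Rot}(\tilde D)\xrightarrow{\phi_D}\mathrm{Rot}(D)\twoheadrightarrow \mathrm{Rot}(D)/\phi_D(\mathrm{Mon}(\pi_{\Pi})).
\]
This is surjective (each factor is) and its kernel contains $\tilde\Psi(\mathrm{Mon}(\pi_{\Pi}))$ by construction, so it factors through the quotient $\mathrm{Rot}(\tilde D)/\mathrm{Mon}(\pi_{\Pi})$. But $\mathrm{Mon}(\pi_{\Pi})=\ker\phi_{\Pi}$, so by the first isomorphism theorem
\[
\mathrm{Rot}(\tilde D)/\mathrm{Mon}(\pi_{\Pi})\cong \mathrm{Rot}(\Pi_p),
\]
and $\mathrm{Rot}(\Pi_p)$ is the rotation group of a regular $p$-gon (or double $p$-gon), hence cyclic of order $p$ or $2p$. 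A quotient of a cyclic group is cyclic, completing the proof.

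The only non-routine step is Step 2; everything else is diagram chasing and first isomorphism theorem. The crux is to extract direction-preservation of deck transformations of $\pi_{\Pi}$ from the definition of the unfolding, and then to combine this with Lemma \ref{KerPhiDRotEdges} to rule out any nontrivial overlap between $\mathrm{Mon}(\pi_{\Pi})$ and $\ker\phi_D$.
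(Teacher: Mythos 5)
Your proposal is correct, and Steps 1 and 2 are essentially the paper's argument: normality via surjectivity of $\phi_D$ applied to $\ker\phi_{\Pi}$, and injectivity of $\Psi$ by showing $\tilde\Psi(\text{Mon}(\pi_{\Pi}))\cap\ker(\phi_D)$ is trivial using Lemma \ref{KerPhiDRotEdges} (your version is in fact stated more precisely than the paper's, since you explicitly supply the missing half of the argument --- that deck transformations of $\pi_{\Pi}$ preserve directions mod $2\pi$ because $\pi_{\Pi}$ does --- whereas the paper compresses this into one sentence). Where you genuinely diverge is Step 3. The paper proves cyclicity by introducing the derivative map $\text{Rot}(\tilde D)\rightarrow O(2)$, arguing that $\text{Mon}(\pi_{\Pi})$ is exactly its kernel, and concluding from finiteness of the image (implicitly, in $SO(2)$, since these are orientation preserving). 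You instead invoke the first isomorphism theorem for $\phi_{\Pi}$, already established in Proposition \ref{CovMapIndSurjHomomor}, to get $\text{Rot}(\tilde D)/\tilde\Psi(\text{Mon}(\pi_{\Pi}))\cong\text{Rot}(\Pi_p)$ and then pass to a further quotient. This is cleaner in that it reuses machinery already on the table rather than introducing a new map, and it makes transparent why the quotient is cyclic of order dividing $p$ (resp. $2p$), which is exactly the content of Proposition \ref{Computek}. The one assertion you should justify rather than state is that $\text{Rot}(\Pi_p)$ is cyclic: this follows since by Proposition \ref{RotGrpOrder} it has order $p$ (resp. $2p$ for odd $p$), and it contains the rotation by $2\pi/p$ (resp. $2\pi/(2p)$), which already generates a subgroup of that order. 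With that sentence added, your proof is complete.
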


\begin{proof}
By Lemma \ref{GamMonSubgrpGamRotTildeD}, there is an injective homomorphism 
$$\tilde \Psi: \text{Mon}(\pi_{\Pi}) \hookrightarrow \text{Rot}(\tilde D).$$
By Proposition \ref{CovMapIndSurjHomomor} and the remarks following it, $\tilde \Psi(\text{Mon}(\pi_{\Pi}))$ is a normal subgroup of $\text{Rot}(\tilde D)$.  Therefore, $\phi_D\left(\tilde \Psi(\text{Mon}(\pi_{\Pi}))\right)$ is a normal subgroup of $\phi_D(\text{Rot}(\tilde D))$.  By Proposition \ref{CovMapIndSurjHomomor} and the remarks following it, $\phi_D$ is surjective, so $\phi_D\left(\tilde \Psi(\text{Mon}(\pi_{\Pi}))\right)$ is a normal subgroup of $\text{Rot}(D) = \phi_D(\text{Rot}(\tilde D))$.  Clearly,
$$\text{Rot}(D) \cong \text{Rot}(\tilde D)/\ker(\phi_D).$$
Consider the induced homomorphism onto the quotient group
$$\Psi: \text{Mon}(\pi_{\Pi}) \rightarrow \text{Rot}(\tilde D)/\ker(\phi_D).$$
Since every element in the image of $\Psi$ preserves the angles between edges by Lemma \ref{KerPhiDRotEdges} and every non-trivial element in $\ker(\phi_D)$ changes the angle of edges, $\Psi$ is an injective homomorphism.

We claim that $\text{Mon}(\pi_{\Pi})$ is the kernel of the derivative map into $O(2)$.  Indeed $\text{Mon}(\pi_{\Pi})$ consists of all permutations of the sheets, each isomorphic to $\Pi_p$, contained in $\text{Rot}(\tilde D)$ that do not rotate the faces.  Since these permutations correspond to the transformation of one face into another, the derivative of this transformation records the amount of rotation.  Hence, $\text{Mon}(\pi_{\Pi})$ is the kernel of the derivative map as claimed.  The cyclic claim follows because the derivative map has finite image in $O(2)$.
\end{proof}

%\begin{theorem}
%\label{RotModMonGrpIsCyc}
%The group
%$$\Gamma_{Rot}(D)/\phi_D\left(\Gamma_{Mon}(\pi_{\Pi})\right)$$
%is cyclic.
%\end{theorem}

%\begin{proof}
%As observed above, $\Gamma_{Mon}(\pi_{\Pi})$ is the kernel of the derivative map into $O(2)$.  Since the image of the derivative map has finite image, the result follows.
%\end{proof}

\section{Bounds on Quantities}
\label{CompSect}

Though explicit values for $k$ are hard to determine because they depend on the global geometry of the surface, bounds can be derived in terms of the Schl\"afli symbol.  This in turn provides bounds on algorithms that perform group computations.

\begin{convention}
Throughout the remainder of the paper we set
$$d = \gcd(p,q).$$
\end{convention}

\begin{convention}
Throughout the remainder of the paper $m$ will refer to the number of faces of $D$.
\end{convention}

\begin{proof}[Proof of Prop. \ref{Computek}]
By definition of a Platonic surface, the cone angle at each vertex is equal to
%\begin{equation}
%\label{TotalVertAngle}
$$\frac{q(p-2)}{p}\pi.$$
%\end{equation}
Therefore, the value of $k'$ is clearly the smallest quantity necessary to guarantee that the angle at every cone point on the unfolding is an integral multiple of $2\pi$.  Moreover, any positive integer that does not satisfy this equation cannot result in a cone point with cone angle that is a multiple of $2\pi$.  Therefore, the divisibility claim follows.

We examine the holonomy of a closed curve on $D$.  Since $D$ is constructed from regular $p$-gons, the interior angle on each face is $(p-2)\pi/p$.  Therefore, each time a curve enters and exits a face, the angle of a tangent vector can only change by a multiple of $\pi/p$.  Hence, every curve on $D$ has holonomy in $(\pi/p)\bZ$.  Taking an appropriate cyclic $2p$-cover allows the resulting holonomy of every lifted curve to lie in $2\pi\bZ$.

This can be improved under the assumption that $p$ is even.  Observe that $2$ can be factored out of $p-2$ in the numerator.  Therefore, each curve traversing each polygon changes the holonomy by an angle that is a multiple of $2\pi/p$.  Hence, $p$ suffices for the maximal degree of the cover.
\end{proof}

%IS THE UPPER BOUND CORRECT?  WHAT ABOUT A QUADRATIC DIFFERENTIAL WITH ALL EVEN ZEROS?  THE UPPER BOUND IS ALMOST DEFINITELY WRONG.  WHAT DO WE LOSE?

%THIS IS OBVIOUSLY FALSE BECAUSE THERE ARE STRATA OF k-DIFFERENTIALS WITH ZEROS THAT ARE MULTIPLES OF $2\pi$ FOR ALL $k$, SO NO BOUND EXISTS

%THESE PROOFS ARE COMPLETELY INCORRECT BECAUSE THEY DON'T CONSIDER THE GLOBAL GEOMETRY
%On the other hand, the largest number one needs to multiply Equation (\ref{TotalVertAngle}) by to get an even multiple of $\pi$ is $p$ if $p$ is even and $2p$ if $p$ is odd.
%The upper bound is clear because it allows every edge of every $p$-gon in the Platonic surface to realize every possible orientation.  The parity difference is merely a result of the fact that the unfolded surface covers $\Pi_p$, and $\Pi_p$ could be one or two regular $p$-gons depending on the parity of $p$.

\begin{lemma}
\label{ValueOfk}
Let $D$ have Schl\"afli symbol $\{p,q\}$.  Then the value of $k'$ from Proposition \ref{Computek} is given in Table \ref{ValsofkTable}.
\end{lemma}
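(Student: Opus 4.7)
The plan is to translate the defining congruence for $k'$ into a $\gcd$ computation, and then derive each table entry by a parity-driven case analysis.

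The key observation is that $k'\, q(p-2)\pi/p \equiv 0 \pmod{2\pi}$ is equivalent to the integer divisibility $2p \mid k'\,q(p-2)$, so that
$$k' \;=\; \frac{2p}{\gcd\!\bigl(2p,\, q(p-2)\bigr)}.$$
Verifying the table therefore reduces to evaluating this $\gcd$ as a function of $p$, $q$, and $d = \gcd(p,q)$.

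Next, I would simplify using coprimality. Write $p = d p_1$ and $q = d q_1$ with $\gcd(p_1, q_1) = 1$. Since $p_1 \mid p$, one has $\gcd(p_1, p-2) = \gcd(p_1, 2)$, and then coprimality of $p_1$ and $q_1$ gives $\gcd(p_1, q_1(p-2)) = \gcd(p_1, 2)$. After pulling the common factor of $d$ out of $2p = 2 d p_1$ and $q(p-2) = d q_1 (p-2)$, the remaining $\gcd$ depends only on a comparison of $2$-adic valuations of $p$, $q$, and $p-2$.

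I would then enumerate the parity cases. If $p$ and $q$ are both odd, $q(p-2)$ is odd, so the $\gcd$ is $d$ and $k' = 2p/d$. If $p$ is odd and $q$ is even, one extra factor of $2$ is picked up from $q$, giving $k' = p/d$. If $p$ is even, $\gcd(p, p-2) = 2$, and the precise $2$-adic valuation of $p-2$ enters: $v_2(p-2) = 1$ when $p \equiv 0 \pmod 4$, while $v_2(p-2) \geq 2$ when $p \equiv 2 \pmod 4$. Combining these with $v_2(q)$ yields the exact power of $2$ in $\gcd(2p, q(p-2))$, and hence the value of $k'$ in each row of the table.

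The main obstacle is the fully even case, when both $p$ and $q$ are even. Here the answer depends on the interplay of $v_2(p)$, $v_2(q)$, and $v_2(p-2)$, and these valuations must be compared carefully (and in several subcases, depending on whether $p \equiv 2$ or $p \equiv 0 \pmod 4$, and whether $v_2(q)$ is small, equal, or large relative to $v_2(p-2)$) in order to extract the correct power of $2$. Once this bookkeeping is done, each entry of the table follows directly from the closed-form $k' = 2p/\gcd(2p, q(p-2))$.
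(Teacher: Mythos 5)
Your plan is sound: the closed form $k' = 2p/\gcd\bigl(2p,\, q(p-2)\bigr)$ is the correct translation of the defining congruence, the reduction $\gcd(2p, q(p-2)) = d\cdot\gcd(2p_1, q_1(p-2))$ with trivial odd part (via $\gcd(p_1, q_1(p-2)) = \gcd(p_1,2)$) is valid, and the remaining $2$-adic bookkeeping does reproduce every entry of Table \ref{ValsofkTable}. This is a genuinely different organization from the paper's proof. The paper sets $L = \frac{q}{d}(p-2) = \frac{p}{d}\cdot\frac{q(p-2)}{p}$ and argues case by case that the candidate multipliers $2p/d$, $p/d$, $p/(2d)$ produce an even integer; it is rather terse about why nothing smaller works (minimality is the actual content of the lemma), especially in the ``$q$ even'' case which it dispatches with the single observation that $L$ is even. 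Your formulation buys minimality automatically, since $n/\gcd(n,m)$ is by definition the least $k'$ with $n \mid k'm$; that is a real improvement in rigor. One correction to your own assessment of difficulty: the ``fully even'' case you flag as the main obstacle is in fact the uniform one. Whenever $q$ is even, $\min\bigl(1+v_2(p_1),\, v_2(q_1)+v_2(p-2)\bigr) = 1$ in every subcase (if $v_2(p_1)=0$ the first term is $1$ and the second is at least $1$; otherwise $v_2(q_1)=0$ and $v_2(p-2)=1$ forces the second term to be $1$), so $k' = p/d$ throughout --- which is why the two even-$q$ columns of the table coincide. The only case requiring a genuinely separate valuation comparison is $q$ odd with $p \equiv 2 \pmod 4$, where the extra factor of $2$ comes from $p-2$ rather than from $q$ and one lands on $p/(2d)$.
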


\begin{proof}
A $p$-gon has interior angle $\frac{p-2}{p}\pi$, which implies that the total angle at each vertex on the Platonic surface is $\frac{q(p-2)}{p}\pi$.  Consider
$$\frac{p}{d} \left( \frac{q(p-2)}{p} \right) = \frac{q}{d}(p-2) = L,$$
and observe that it is always an integer because $d | q$.  If $q$ is even, then $L$ is even as well.  On the other hand, if $q$ is odd, then there are several cases.  If $p$ is also odd, then $L$ is odd, and $k' = 2p/d$.  If $4|p$, then $2|(p-2)$ and $4 \not | (p-2)$, which implies that $k' = p/d$.  Finally, if $4|(p-2)$, then $k' = p/(2d)$ because $L/2$ is even in this case and $2 \not| d$ because $q$ is odd.
\end{proof}

\begin{table}
\centering
\begin{tabular}{c|ccc}
 \backslashbox{$p$}{$q$} & odd & $0 \mod 4$ & $2 \mod 4$\\
\hline
odd & $\frac{2p}{d}$ & $\frac{p}{d}$ & $\frac{p}{d}$\\
$0 \mod 4$ & $\frac{p}{d}$ & $\frac{p}{d}$ & $\frac{p}{d}$\\
$2 \mod 4$ & $\frac{p}{2d}$ & $\frac{p}{d}$ & $\frac{p}{d}$\\
\end{tabular}
\caption{Values of $k'$ depending on $\{p,q\}$}
\label{ValsofkTable}
\end{table}

The following is well-known and can be found in \cite[Ch. III, Cor. 5.2]{BredonTopologyAndGeometry}.

\begin{proposition}
\label{MonGrpOrderEqDeg}
The order of the monodromy group $\text{Mon}(\pi_{\Pi})$ is equal to the degree of the cover $\pi_{\Pi}: \tilde D \rightarrow \Pi_p$.
\end{proposition}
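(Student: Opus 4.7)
The plan is to reduce this to the standard correspondence between deck groups and monodromy groups for normal (regular) covers, using ingredients already established earlier in the paper. By Theorem \ref{NormCov}, the covering $\pi_{\Pi}: \tilde D \to \Pi_p$ is normal, and Lemma \ref{GamMonSubgrpGamRotTildeD} (together with the citations to Dydak and Massey invoked there) already supplies the isomorphism $\text{Mon}(\pi_{\Pi}) \cong \text{Deck}(\pi_{\Pi})$. Therefore it suffices to show that $|\text{Deck}(\pi_{\Pi})|$ equals the degree $k$ of the cover.

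To establish this, I would fix a basepoint $x_0 \in \Pi_p$ chosen away from the branch locus, so that $\pi_{\Pi}^{-1}(x_0)$ is a set of exactly $k$ points—one per sheet. First I would invoke the universal fact that any deck group acts freely on each fiber: if $\delta \in \text{Deck}(\pi_{\Pi})$ fixes some point $\tilde x_0 \in \pi_{\Pi}^{-1}(x_0)$, then $\delta$ is a lift of $\text{id}_{\Pi_p}$ fixing $\tilde x_0$, so uniqueness of lifts forces $\delta = \text{id}_{\tilde D}$. This yields $|\text{Deck}(\pi_{\Pi})| \leq k$ via the evaluation map $\delta \mapsto \delta(\tilde x_0)$.

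Next I would use normality to promote this inequality to an equality. The normality condition $(\pi_{\Pi})_* \pi_1(\tilde D, \tilde x_0) \triangleleft \pi_1(\Pi_p, x_0)$ is precisely what guarantees that the deck group acts transitively on the fiber $\pi_{\Pi}^{-1}(x_0)$: given any other point $\tilde x_1$ in the fiber, the standard lifting construction produces a deck transformation taking $\tilde x_0$ to $\tilde x_1$. Combining freeness and transitivity shows that $\text{Deck}(\pi_{\Pi})$ acts simply transitively on a set of size $k$, so $|\text{Deck}(\pi_{\Pi})| = k$, and the isomorphism from Lemma \ref{GamMonSubgrpGamRotTildeD} transports this to $|\text{Mon}(\pi_{\Pi})| = k$.

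There is no real obstacle: the statement is textbook material, as the author's citation to \cite[Ch.~III, Cor.~5.2]{BredonTopologyAndGeometry} indicates, and both non-trivial inputs—normality of $\pi_{\Pi}$ and the deck–monodromy identification—are already in hand from earlier sections. The only minor care needed is to make sure the basepoint is chosen to avoid the branch locus so that the fiber has the expected cardinality $k$; this is harmless since the branch locus is a finite set of points.
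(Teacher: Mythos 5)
Your proof is correct and is exactly the standard simply-transitive-deck-action argument that the paper itself does not spell out: the paper offers no proof of this proposition, only the citation to \cite[Ch.~III, Cor.~5.2]{BredonTopologyAndGeometry}, and your argument (freeness of the deck action from uniqueness of lifts, transitivity from normality via Theorem \ref{NormCov}, transported to $\text{Mon}(\pi_{\Pi})$ by the identification in Lemma \ref{GamMonSubgrpGamRotTildeD}) is precisely what that reference supplies. One small caution: you write $k$ for the degree of $\pi_{\Pi}$, but the paper reserves $k$ for the degree of $\pi_D:\tilde D\to D$ (the degree of $\pi_{\Pi}$ is $km$ or $km/2$ per Proposition \ref{MonGrpOrder}), so you should rename that quantity to avoid a clash; the mathematics is unaffected.
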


\begin{proposition}
\label{RotGrpOrder}
Let $D$ be a Platonic surface with Schl\"afli symbol $\{p,q\}$.  The order of the rotation group $\text{Rot}(D)$ is $mp$.
\end{proposition}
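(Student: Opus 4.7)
The plan is to show that $\text{Rot}(D)$ acts simply transitively on the set of pairs, and then count pairs directly. The count is immediate: since $D$ has $m$ faces, each of which is a regular $p$-gon with $p$ boundary edges, there are exactly $mp$ pairs $(f,e)$.

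Transitivity of $\text{Rot}(D)$ on pairs is the orientation-preserving analogue of the Platonic definition, and it is already implicit in the earlier material (it is the content of Theorem \ref{NormCov} applied to $D$ itself rather than to $\tilde D$). To see it directly from the flag-transitivity of $\text{Aut}(D)$: given two pairs $(f_1,e_1)$ and $(f_2,e_2)$, choose for each the vertex $v_i$ of $e_i$ that is consistent with the fixed orientation of $D$ (say, the initial vertex when traversing $\partial f_i$ counterclockwise). The flag automorphism sending $(f_1,e_1,v_1)$ to $(f_2,e_2,v_2)$ matches two orientation-consistent flags, hence must lie in $\text{Rot}(D)$.

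For freeness, suppose $F \in \text{Rot}(D)$ fixes a pair $(f,e)$. Then $F$ sends $f$ to $f$, and its restriction $F|_f$ is an orientation-preserving Euclidean isometry of the regular $p$-gon $f$ that fixes the edge $e$ setwise. Such isometries form the cyclic group of rotations of the $p$-gon, which acts freely and transitively on the $p$ edges, so $F|_f = \text{id}_f$. In particular $F$ fixes $e$ pointwise, so the same argument applied to the pair $(f',e)$, where $f'$ is the face on the other side of $e$, yields $F|_{f'} = \text{id}$. Iterating across shared edges and invoking connectedness of $D$ gives $F = \text{id}_D$. Combined with transitivity and the count of pairs, we conclude $|\text{Rot}(D)| = mp$. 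The only step requiring any real care is the passage from flag-transitivity of $\text{Aut}(D)$ to pair-transitivity of $\text{Rot}(D)$; the remainder is a bookkeeping argument about regular $p$-gons.
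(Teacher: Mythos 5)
Your proof is correct and takes the same route as the paper: count the $mp$ pairs and observe that $\text{Rot}(D)$ acts simply transitively on them. The paper simply asserts both transitivity on pairs and the fact that a rotation is determined by its action on one pair, whereas you supply the details (deriving pair-transitivity of $\text{Rot}(D)$ from flag-transitivity of $\text{Aut}(D)$ via orientation-consistent vertices, and proving freeness by rigidity on a face plus connectedness), which is a welcome elaboration but not a different argument.
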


\begin{proof}
Clearly, there are $mp$ pairs.  Since the rotation group of the Platonic surface is transitive on pairs by definition of the Platonic surface and specifying how one pair maps to another uniquely determines the map, the order of the rotation group is $mp$.
\end{proof}

\begin{table}
\centering
\begin{tabular}{c|ccc}
 \backslashbox{$p$}{$q$} & odd & $0 \mod 4$ & $2 \mod 4$\\
\hline
odd & $\frac{mp}{d}$ & $\frac{mp}{2d}$ & $\frac{mp}{2d}$\\
$0 \mod 4$ & $\frac{mp}{d}$ & $\frac{mp}{d}$ & $\frac{mp}{d}$\\
$2 \mod 4$ & $\frac{mp}{2d}$ & $\frac{mp}{d}$ & $\frac{mp}{d}$\\
\end{tabular}
\caption{Lower bounds for the order of $\text{Mon}(\pi_{\Pi})$}
\label{OrdersOfMonGrpTable}
\end{table}

\begin{proposition}
\label{MonGrpOrder}
Let $D$ be a Platonic surface with Schl\"afli symbol $\{p,q\}$.  For each $\{p,q\}$, a lower bound for the order of the monodromy group is given in Table \ref{OrdersOfMonGrpTable}.
\end{proposition}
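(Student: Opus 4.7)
The plan is to combine Proposition \ref{MonGrpOrderEqDeg}, which identifies $|\text{Mon}(\pi_{\Pi})|$ with the degree of the covering $\pi_\Pi : \tilde D \to \Pi_p$, with the lower bound $k \geq k'$ from Proposition \ref{Computek} and the explicit values of $k'$ collected in Lemma \ref{ValueOfk} (Table \ref{ValsofkTable}). The bridge between the two is a simple face-counting step: I will show that $\deg(\pi_\Pi) = km$ when $p$ is even and $\deg(\pi_\Pi) = km/2$ when $p$ is odd.

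The face count runs as follows. Since $\pi_D : \tilde D \to D$ is unramified on face interiors and has degree $k$, and $D$ has $m$ faces, the unfolding $\tilde D$ is tiled by $km$ regular $p$-gons. The base $\Pi_p$ has one face when $p$ is even and two faces when $p$ is odd. Both $\tilde D$ and $\Pi_p$ are regular $p$-gon-tiled surfaces carrying compatible horizontal directions, so $\pi_\Pi$ sends each face of $\tilde D$ isometrically onto a face of $\Pi_p$. Because $\deg(\pi_\Pi)$ is constant, the number of faces of $\tilde D$ lying above any particular face of $\Pi_p$ equals $\deg(\pi_\Pi)$; summing over the $1$ or $2$ faces of $\Pi_p$ yields $km = (\text{faces of }\Pi_p)\cdot \deg(\pi_\Pi)$, hence $\deg(\pi_\Pi) = km$ in the even case and $km/2$ in the odd case. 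The quantity $km/2$ is indeed an integer because $p$ odd forces $m$ to be even via the identity $2e = pm$ for the number of edges $e$ of $D$.

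Substituting the inequality $k \geq k'$ and reading off $k'$ from Table \ref{ValsofkTable} then reproduces each cell of Table \ref{OrdersOfMonGrpTable}. For instance, whenever $4 \mid p$ we have $k' = p/d$ and thus $|\text{Mon}(\pi_\Pi)| = km \geq (p/d)m = mp/d$; whenever $p$ and $q$ are both odd we have $k' = 2p/d$ and thus $|\text{Mon}(\pi_\Pi)| = km/2 \geq (2p/d)(m/2) = mp/d$; when $p$ is odd and $q$ is even we have $k' = p/d$, giving $|\text{Mon}(\pi_\Pi)| \geq (p/d)(m/2) = mp/(2d)$; and when $p \equiv 2 \bmod 4$ with $q$ odd we have $k' = p/(2d)$, giving $|\text{Mon}(\pi_\Pi)| \geq mp/(2d)$. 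The remaining cells are entirely analogous. I expect the only substantive step to be the face-count formula for $\deg(\pi_\Pi)$; the rest of the argument is a routine tabular verification.
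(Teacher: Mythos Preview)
Your proposal is correct and follows essentially the same route as the paper: invoke Proposition~\ref{MonGrpOrderEqDeg} to identify $|\text{Mon}(\pi_\Pi)|$ with $\deg(\pi_\Pi)$, compute that degree as $km$ or $km/2$ by counting faces over $\Pi_p$, and then combine $k'\mid k$ from Proposition~\ref{Computek} with the values of $k'$ in Table~\ref{ValsofkTable}. Your write-up is in fact more explicit than the paper's, which simply asserts the degree formula and leaves the table verification to the reader; the integrality remark for $km/2$ appears in the paper only as a separate remark after the proof.
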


\begin{proof}
By Proposition \ref{MonGrpOrderEqDeg}, the monodromy group has order $km$, when $p$ is even, and $km/2$ when $p$ is odd.  By Proposition \ref{Computek}, $k'$ divides $k$ and by Lemma \ref{ValueOfk}, $k'$ is given in Table \ref{ValsofkTable}.
\end{proof}

\begin{remark}
We observe that all values in Table \ref{OrdersOfMonGrpTable} are indeed integers because if $p$ is odd, then $\Pi_p$ consists of two polygons and $mk$ is indeed an even number.
\end{remark}

\begin{corollary}
\label{IndexMonInRotGrp}
Let $D$ be a Platonic surface with Schl\"afli symbol $\{p,q\}$.  For each $\{p,q\}$, an upper bound for the order of the cyclic group $\text{Rot}(D)/\phi_D\left(\text{Mon}(\pi_{\Pi})\right)$ is given in Table \ref{ValsOfMonGrpInd}.
\end{corollary}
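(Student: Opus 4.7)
The plan is to evaluate the index $[\text{Rot}(D) : \phi_D(\text{Mon}(\pi_{\Pi}))]$ directly, using orders already computed earlier in the section. By Theorem \ref{MonGrpNormSubgrpRot} this index is well-defined and the quotient is cyclic; what remains is to bound it above, case-by-case in the residues of $p$ and $q$ modulo $4$.

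The first (and only non-mechanical) step is to verify that $|\phi_D(\text{Mon}(\pi_{\Pi}))| = |\text{Mon}(\pi_{\Pi})|$. This is essentially already inside the proof of Theorem \ref{MonGrpNormSubgrpRot}: the map $\Psi$ constructed there, obtained by composing $\tilde\Psi$ with the passage to $\text{Rot}(\tilde D)/\ker(\phi_D) \cong \text{Rot}(D)$, was shown to be injective via Lemma \ref{KerPhiDRotEdges}, since elements of $\tilde\Psi(\text{Mon}(\pi_{\Pi}))$ fix edge angles while every non-trivial element of $\ker(\phi_D) = \text{Mon}(\pi_D)$ rotates edges. Hence $\tilde\Psi(\text{Mon}(\pi_{\Pi})) \cap \ker(\phi_D) = \{\text{Id}\}$, so $\phi_D$ restricted to $\tilde\Psi(\text{Mon}(\pi_{\Pi}))$ is injective and its image has the same cardinality as $\text{Mon}(\pi_{\Pi})$.

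The second step is then just Lagrange's theorem combined with Proposition \ref{RotGrpOrder}:
$$\left| \text{Rot}(D)/\phi_D(\text{Mon}(\pi_{\Pi})) \right| = \frac{mp}{|\text{Mon}(\pi_{\Pi})|}.$$
Substituting the case-by-case lower bounds for $|\text{Mon}(\pi_{\Pi})|$ read off from Table \ref{OrdersOfMonGrpTable} (Proposition \ref{MonGrpOrder}) yields the case-by-case upper bounds. Concretely, entries of the form $mp/d$ yield the upper bound $d$, while entries of the form $mp/(2d)$ yield the upper bound $2d$; so Table \ref{ValsOfMonGrpInd} is obtained from Table \ref{OrdersOfMonGrpTable} by dividing $mp$ by the corresponding entry.

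I do not expect any real obstacle. The only thing to be careful about is the direction of the inequality: one must confirm that Table \ref{OrdersOfMonGrpTable} genuinely produces a \emph{lower} bound on $|\text{Mon}(\pi_{\Pi})|$ (rather than on some smaller quantity), so that taking reciprocals yields an \emph{upper} bound on the index. This is immediate from the derivation of Proposition \ref{MonGrpOrder}, which combines the divisibility $k' \mid k$ from Proposition \ref{Computek} with Proposition \ref{MonGrpOrderEqDeg}.
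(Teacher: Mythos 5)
Your proof is correct and is exactly the derivation the paper intends (the corollary is stated without an explicit proof, as an immediate consequence of Propositions \ref{RotGrpOrder} and \ref{MonGrpOrder}): you correctly isolate the one point that needs checking, namely that $\phi_D$ is injective on $\tilde\Psi(\text{Mon}(\pi_{\Pi}))$ so that the index equals $mp/|\text{Mon}(\pi_{\Pi})|$, and the table entries then follow by dividing $mp$ by the lower bounds in Table \ref{OrdersOfMonGrpTable}.
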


\begin{table}
\centering
\begin{tabular}{c|ccc}
 \backslashbox{$p$}{$q$} & odd & $0 \mod 4$ & $2 \mod 4$\\
\hline
odd & $d$ & $2d$ & $2d$\\
$0 \mod 4$ & $d$ & $d$ & $d$\\
$2 \mod 4$ & $2d$ & $d$ & $d$\\
\end{tabular}
\caption{Upper bounds for the order of $\text{Rot}(D)/\phi_D\left(\text{Mon}(\pi_{\Pi})\right)$}
\label{ValsOfMonGrpInd}
\end{table}

\begin{proof}[Pf. of Thm. \ref{GCD1Thm}]
By inspection of Table \ref{ValsOfMonGrpInd}, we focus on the entries equal to $d$.  Furthermore, at least one of $p$ and $q$ must be odd, otherwise $\gcd(p,q) \geq 2$.  For an entry in Table \ref{ValsOfMonGrpInd} to equal one, either both $p$ and $q$ are odd, or $p \equiv 0 \mod 4$ and $q$ is odd.  

If $d = 1$, $q \equiv 0 \mod 4$, and $p$ is odd, then we consider the following transformation.  Recall that the \emph{dual} $X^{\vee}$ of a Platonic surface $X$ with Schl\"afli symbol $\{p,q\}$ is the surface resulting from replacing every vertex by a face with $q$-sides and every face by a vertex with $p$-edges incident to it.\footnote{This is simply the generalization of the dual of a Platonic solid from antiquity.}  Consequentially, the Schl\"afli symbol of $X^{\vee}$ is $\{q,p\}$.  We claim that $\text{Rot}(X^{\vee}) \cong \text{Rot}(X)$, which will complete the proof by reducing to the case above.  The choice of pairs $(f,e)$ above for the rotation group was arbitrary and $(e,v)$, i.e. edge -- vertex, could have been chosen as well.  However, under this choice, $(e,v)$ on $X$ would become $(f,e)$ on $X^{\vee}$.  Since every element of each rotation group is completely determined by a pair of either edge-vertex pairs, or face-edge pairs, the bijection between such pairs induces an isomorphism between the groups.
\end{proof}

\section{Examples}
\label{ExampleSect}

We consider two examples.  The first of which was carried out almost to completion in \cite{AthreyaAulicinoHooperDodecahedron} and the second is a well-known example from genus two.

%\

\subsubsection*{Auxiliary File}

For the convenience of the reader a Jupyter notebook titled \verb|code_from_section_5.ipynb| containing all of the code in this section is available for download on the arXiv with a preprint of this paper as well as on the author's website.  The reader may also find the website \cite{DodecahedronWebsite} helpful.  It contains examples of all of the classical Platonic solids.

\subsection{The Dodecahedron}

Here we present a new derivation of the rotation group of the dodecahedron using the calculations from \cite{AthreyaAulicinoHooperDodecahedron} and the theory developed above.  Recall the Schl\"afli symbol for the dodecahedron is $\{5,3\}$.  Therefore, Theorem \ref{GCD1Thm} applies.

The generators of the monodromy group were produced in \cite[$\S$ 6.3]{AthreyaAulicinoHooperDodecahedron} and running the code results in four generators.  However, only two are actually needed to generate the full group.

\begin{verbatim}
gen1_dodec = [(0, 19, 21), (1, 18, 4), (2, 56, 28), (3, 7, 51), 
(5, 49, 58), (6, 14, 16), (8, 17, 46), (9, 11, 12), (10, 44, 48), 
(13, 22, 36), (15, 39, 43), (20, 34, 38), (23, 27, 31), 
(24, 29, 26), (25, 54, 33), (30, 32, 59), (35, 55, 57), 
(37, 53, 40), (41, 50, 52), (42, 47, 45)]

gen2_dodec = [(0, 13, 16), (1, 12, 3), (2, 50, 46), (4, 25, 48), 
(5, 43, 9), (6, 8, 11), (7, 45, 36), (10, 38, 14), (15, 33, 19), 
(17, 40, 31), (18, 28, 26), (20, 58, 29), (21, 24, 23), 
(22, 30, 56), (27, 55, 51), (32, 53, 34), (35, 49, 52), 
(37, 47, 39), (41, 44, 42), (54, 57, 59)]
\end{verbatim}

Applying the Sage functions below show that these generators result in a group of order $60$.

\begin{verbatim}
GMon_dodec = PermutationGroup([gen1_dodec, gen2_dodec])
GMon_dodec.order()
\end{verbatim}

Therefore, this construction produces the rotation group of the dodecahedron as a faithful representation into $S_{60}$.

\subsection{The Bolza Surface}
\label{BolzaSurfSect}

In this example, we use Theorem \ref{GCD1Thm} to represent the rotation group of the Bolza surface as a subgroup of $S_{48}$.  By inspection of the universal covering of the Bolza surface, it has a fundamental domain that can be tiled by hyperbolic octagons, each consisting of $16$ hyperbolic triangles that all meet at a single vertex.  Each vertex of this octagonal tiling of the Bolza surface is incident with three octagons.  This decomposes the Bolza surface itself into six hyperbolic octagons.  By flattening each of the octagons, we get the flat Bolza surface depicted in Figure \ref{BolzaSurface}.  Since the Bolza surface is a Platonic surface with Schl\"afli symbol $\{8,3\}$, Theorem \ref{GCD1Thm} applies to this case to represent the rotation group as a subgroup of $S_{48}$ by computing the monodromy group.

We follow the computations in \cite[$\S$6, $\S$7]{AthreyaAulicinoHooperDodecahedron}.  In particular, the cube is most relevant here because, like the cube, the Bolza surface consists of even-sided polygonal faces.  Therefore, it avoids the extra computations needed for doubled odd polygons.

\begin{figure}
\begin{center}
\includegraphics[width=.8\textwidth]{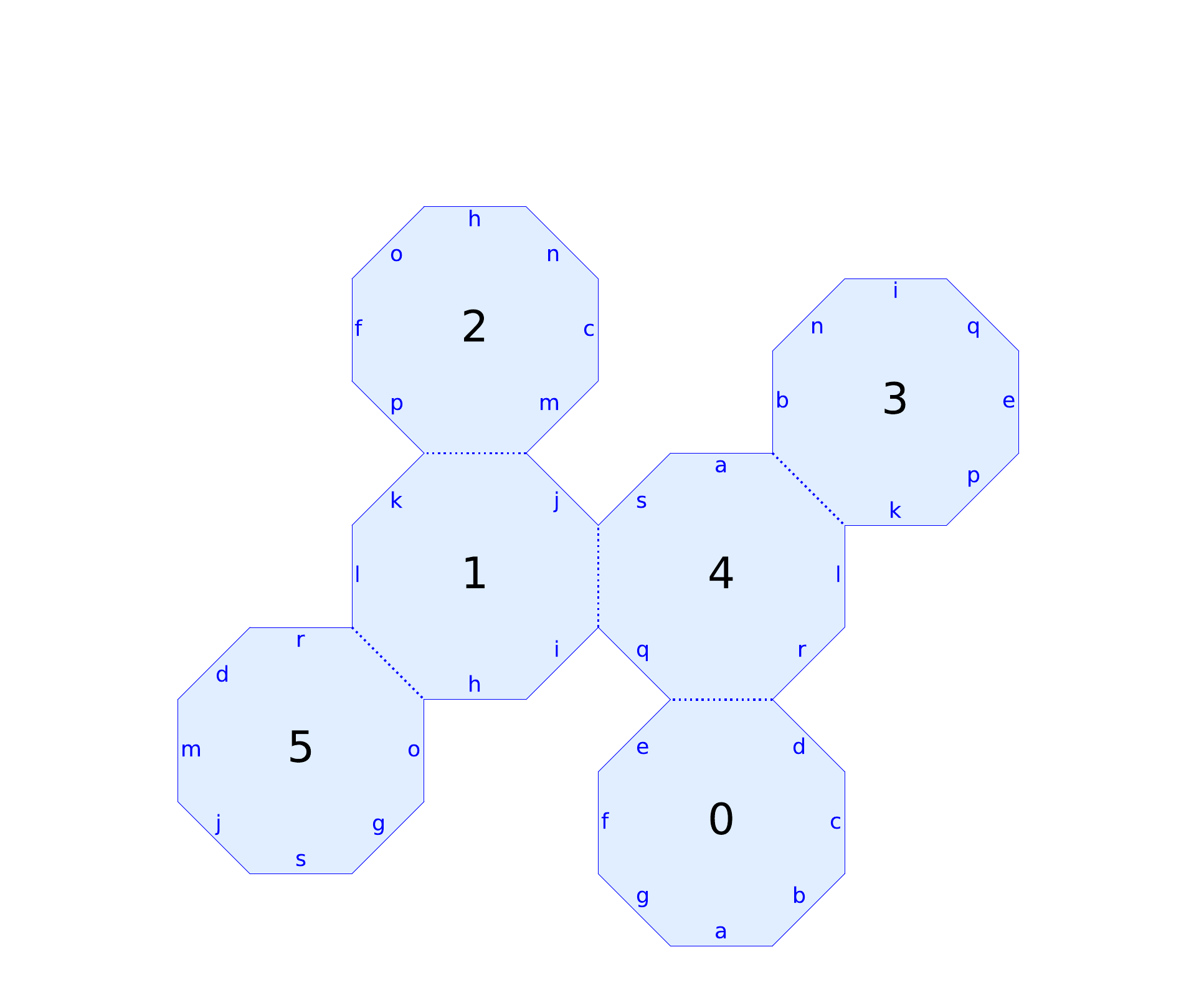}
\end{center}
\caption{The Bolza Surface}
\label{BolzaSurface}
\end{figure}

We follow the conventions established in \cite{AthreyaAulicinoHooperDodecahedron}.  Rather than labeling the octagons in Figure \ref{BolzaSurfaceUnfold} by individual numbers, it is actually much easier to label them by pairs $(i,j)$.  The coordinate $j \in \{0, \ldots, 5\}$ specifies the octagon seen in Figure \ref{BolzaSurface}.  The coordinate $i \in \{0, \ldots, 7\}$ specifies on which sheet in Figure \ref{BolzaSurfaceUnfold} the octagon lies.  Recall from \cite{AthreyaAulicinoHooperDodecahedron} that the advantage to this system is that the second coordinate is fixed as can be seen in the function \verb|build_adj_8_3|.

The function \verb|build_adj_8_3| assigns to each octagon a list of eight octagons to which it is adjacent using the convention that the bottom horizontal edge of each octagon in $\tilde D$ is labeled $0$, the next edge is $1$ going counter-clockwise, and continuing to $7$.  The lists are ordered so that index $i$ in the list gives the coordinates of the octagon incident with edge $i$ of octagon \verb|(sheet, octagon)|.  A particular symmetry of this surface that we exploit is that the parallel edges of every octagon are incident with the same octagon.  Therefore it suffices to double each list with the \verb|*2| command rather than repeat them.

\begin{verbatim}
def build_adj_8_3(sheet, octagon):
    i = sheet;
    oct_8_3_adj_base = 6*[None]
    oct_8_3_adj_base[0] = [[i,4],[i-1,3],[i-4,2],[i+2,5]]*2
    oct_8_3_adj_base[1] = [[i,2],[i+1,3],[i,4],[i,5]]*2
    oct_8_3_adj_base[2] = [[i,1],[i-1,5],[i-4,0],[i+2,3]]*2
    oct_8_3_adj_base[3] = [[i-1,1],[i-2,2],[i+1,0],[i,4]]*2
    oct_8_3_adj_base[4] = [[i,0],[i+1,5],[i,1],[i,3]]*2
    oct_8_3_adj_base[5] = [[i-1,4],[i-2,0],[i+1,2],[i,1]]*2
    prelim_adj = [oct_8_3_adj_base[octagon%6][(k-i)%8] for k in range(8)]
    return [[item[0]%8, item[1]%6] for item in prelim_adj]
\end{verbatim}

As in the case of the other Platonic solids from \cite[$\S$ 6]{AthreyaAulicinoHooperDodecahedron}, the only non-trivial line here is the line defining \verb|prelim_adj| because the index \verb|(k-i)%8| is not obvious.  However, this follows from the fact that the sheets of $\tilde D$ are numbered counter-clockwise in the presentation in Figure \ref{BolzaSurfaceUnfold}.  Therefore, $i$ rotations by $2\pi/8$ move edge $-i \mod 8$ into the lower horizontal position, whereby it becomes edge $0$.  Thus, the index follows.

Next we implicitly number all $48$ octagons in the cover by defining a complete list of all of them.

\begin{verbatim}
def octagons():
    return list(itertools.product(*[range(8), range(6)]))
\end{verbatim}

Finally, the permutations can be determined using the code that is a modification of the code for the cube from \cite[Ex. 6.1]{AthreyaAulicinoHooperDodecahedron}.  We again refer the reader to that paper for an explanation.

\begin{verbatim}
def perm_oct(abcd):
    oct_list = octagons()
    total = []
    i = 0
    perm_a_sub = []
    perm_a = []
    while len(total) < 46:
        total += perm_a_sub
        total.sort()
        if len(total) != 0:
            i_list = [j for j in range(len(total)) if j != total[j]]
            if i_list == []:
                i = len(total)
            else:
                i = i_list[0]
        perm_a_sub = []
        while i not in perm_a_sub:
            perm_a_sub += [i]
            i = oct_list.index(tuple(build_adj_8_3 \\
                (oct_list[i][0], oct_list[i][1])[abcd]))
        perm_a.append(tuple(perm_a_sub))
    return perm_a
\end{verbatim}

Given the functions above, we follow the commands for the dodecahedron above to produce the rotation group as a subgroup of $S_{48}$.

\begin{verbatim}
GMon_8_3 = PermutationGroup([perm_oct(0), perm_oct(1), \\
    perm_oct(2), perm_oct(3)])
GMon_8_3.order()
\end{verbatim}

The output of the last line is $48$, which is equal to the order of the rotation group by Theorem \ref{GCD1Thm}.  Hence, this gives a presentation of the rotation group of the Bolza surface as a subgroup of $S_{48}$.  Its rotation group is generated by the permutations \verb|perm_oct(0)|, \verb|perm_oct(1)|, \verb|perm_oct(2)|, and \verb|perm_oct(3)|.

\begin{remark}
We observe that the first three generators above suffice to generate the rotation group.
\end{remark}

\begin{figure}
\begin{center}
\includegraphics[width=\textwidth]{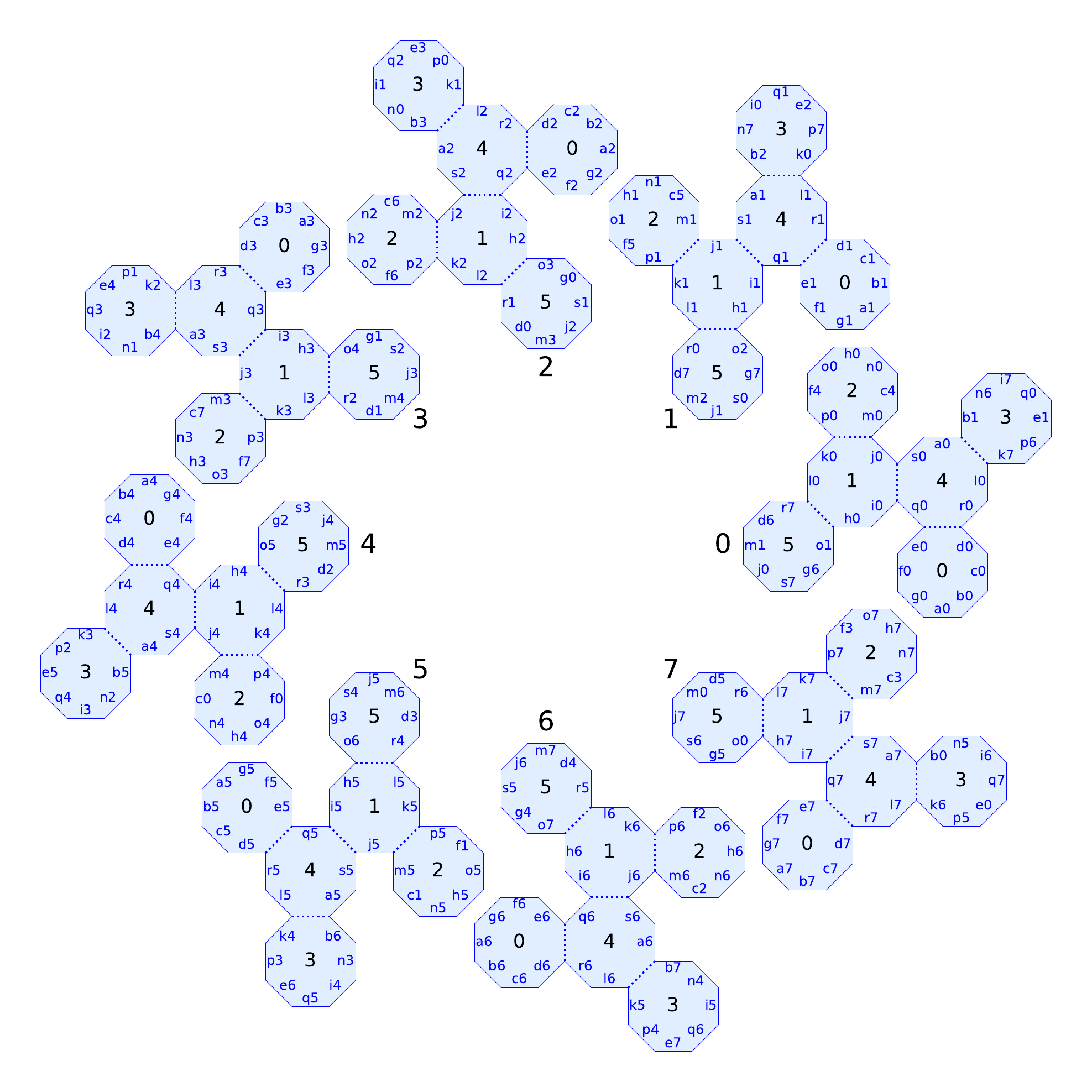}
\end{center}
\caption{The Unfolding of the Bolza Surface}
\label{BolzaSurfaceUnfold}
\end{figure}

%\appendix

\bibliography{fullbibliotex}{}

\end{document}